\documentclass[10pt,psamsfonts]{amsart}

\usepackage{indentfirst}
\usepackage{graphics,amsthm}
\usepackage{color}
\usepackage{epsfig}
\usepackage{fancybox}
\usepackage{mathrsfs}
\usepackage{hyperref}
\usepackage{newlfont}
\usepackage{amsmath,amsfonts,amstext,amssymb,bezier,amsthm}
\usepackage{subfigure}
\usepackage[active]{srcltx}
\usepackage[all,cmtip]{xy}
\usepackage[english]{babel}
\usepackage[latin1]{inputenc}
\usepackage{graphicx,psfrag}
\usepackage{fancyhdr}
\usepackage{niceframe}
\usepackage{float}
\usepackage{xspace,setspace}
\usepackage{dsfont}
\usepackage{cancel}
\usepackage{ulem}
\usepackage{tikz}

\usetikzlibrary{matrix}
\usepackage{caption}

\newtheorem{obs}{Remark}[section]
\newtheorem{theorem}[obs]{Theorem}
\newtheorem{definition}[obs]{Definition}
\newtheorem{proposition}[obs]{Proposition}
\newtheorem{corollary}[obs]{Corollary}
\newtheorem{lemma}[obs]{Lemma}
\theoremstyle{definition}
\newtheorem{example}[obs]{Example}
\newtheorem{remark}[obs]{Remark}

\newcommand*\xbar[1]{%
  \hbox{%
    \vbox{%
      \hrule height 0.5pt % The actual bar
      \kern0.5ex%         % Distance between bar and symbol
      \hbox{%
        \kern-0.1em%      % Shortening on the left side
        \ensuremath{#1}%
        \kern-0.1em%      % Shortening on the right side
      }%
    }%
  }%
}

%\vspace{0.3cm}}

                % Norma

\linespread{1.3}

\setlength{\oddsidemargin}{0 cm}        
 \setlength{\evensidemargin}{0 cm}   

 \setlength{\topmargin}{-1 cm}      
 \setlength{\footskip}{1.5 cm}             

 \setlength{\headsep}{0.7 cm}          
 \setlength{\headheight}{1 cm}         

 \setlength{\textheight}{21 cm}    
 \setlength{\textwidth}{6.5 in}   

\setlength{\marginparwidth}{0 cm}  
\setlength{\marginparsep}{0.5 cm}

\makeindex

\begin{document}
\renewcommand{\arraystretch}{1.5}

\author[Lima]{D. V. S. Lima$^1$}

\author[Silveira]{M. R. da Silveira$^2$}

\author[Vieira]{E. R. Vieira$^3$}

\address{$^1$ CMCC, Federal University of ABC} \email{dahisy.lima@ufabc.edu.br}

\address{$^2$ CMCC, Federal University of ABC} \email{mariana.silveira@ufabc.edu.br}

\address{$^3$ DIMACS, Rutgers University} \email{ewerton.v@rutgers.edu}

% DIMACS

\title[ Covering Action on Conley Index Theory]
{ Covering Action on  Conley Index Theory}

\subjclass[2010]{Primary: 37B30; 14E20;
 Secondary: 57M10;	58E40; 37B35}

\keywords{Conley index, connection matrix, covering space, circle-valued function, Novikov complex}

\maketitle

\begin{abstract}
In this paper, we  apply Conley index theory in a covering space of an invariant set $S$, possibly not  isolated, in order to describe  the dynamics in $S$.
More specifically, we consider the action of the covering translation group in order to define a topological separation of $S$
which distinguishes  the connections between the Morse sets within a Morse decomposition of $S$.
The theory developed herein generalizes the classical connection matrix theory, since one obtains enriched information on the connection maps for non isolated invariant sets, as well as, for isolated invariant sets.
 Moreover, in the case of an infinite cyclic covering induced by a circle-valued Morse function, one proves that the Novikov differential of $f$ is a particular  case of the $p$-connection matrix defined herein.

\end{abstract}

\tableofcontents

\section{Introduction}

Conley index theory is concerned about the topological structure of invariant sets  of a continuous flow on a topological space $X$, and how they are connected to each other \cite{MR511133,MR1901060,MR797044}.
The foundation of this theory, introduced in \cite{MR511133}, relies  on the fact that  there are two possibilities for the  behavior of flow lines into an isolated invariant set: a point can either be chain recurrent or it can belong to a connecting orbit from a chain recurrent piece to another one. For instance, in the case of  Morse-Novikov theory on a compact manifold,  the chain recurrent pieces are the rest points (and periodic orbits in the Novikov case) and the Morse-Novikov indices are related to the topology of the manifold. Furthermore, the Morse-Novikov inequalities impose the existence of connections between some pairs of rest points. On the other hand, in the case of Conley theory, for a flow not necessarily gradient-like, instead of connections between rest points, the global topology of the space forces connections between chain recurrent pieces (isolated invariant sets) of the flow. Such information is encoded in a matrix called connection matrix (which corresponds to the boundary operator in Morse-Novikov theory).

More specifically, given an isolated invariant set $S \subseteq X$, the approach is to consider a  decomposition $\mathcal{M}(S)$ of $S$ into a family of compact invariant sets which contains the recurrent set and such that the flow on the rest of the space is gradient-like, i.e., there is a continuous Lyapunov function which is strictly decreasing on orbits which are not chain recurrent. Such a decomposition is called a Morse decomposition of $S$ and each set of the family is known as a Morse set.  
The Conley index of each Morse set carries some topological information about the local behaviour of the flow near that set.

The connection matrix theory \cite{MR857439, MR978368, MR972705} was motivated by the desire of obtaining information on the connections between the Morse sets within a Morse decomposition. The entries of a connection matrix are homomorphisms between the homology Conley indices of the Morse sets, hence it contains information about the distribution of the Morse sets within the Morse decomposition.

In the case of a Morse-Smale flow,  connection matrices have a nice characterization. More precisely, suppose that  $\varphi$ is the negative gradient flow of a Morse function $f$ on a closed manifold $M$, satisfying the Morse-Smale transversality condition. Consider the $\prec_{f}$-Morse decomposition where each Morse set corresponds to a critical point of $f$ and $\prec_{f}$ is the flow ordering.
In this case, the connection matrix is unique and it coincides with the differential of the Morse complex as proved in \cite{MR1045282}. 

Another interesting situation is when  $\varphi$ is the negative gradient flow of a circle-valued Morse function $f$ on a closed manifold $M$ satisfying the Morse-Smale transversality condition.  One can also define a chain complex, called the Novikov complex $(\mathcal{N}_{\ast}(f),\partial^{Nov})$, as in \cite{MR2017851,pajitnov2008circle}.  However, $\partial^{Nov}$ is not a connection matrix. For instance, the  differential $\partial^{Nov}$ corresponding to the example in Figure \ref{fig:my_labelfig} is non zero. In fact, $\partial^{Nov}(h_{2}^{4},h_{1}^{2}) = \partial^{Nov}(h_{1}^{3},h_{0}^{1})  = 1 - t^2$. On the other hand, the connection matrix  is the null map.
Hence, the zero entries  of  the connection matrix  do not give information about the  connections between the corresponding  Morse sets. In this particular setting, the Novikov differential gives more information than the connection matrix.

\begin{figure}[h!t]
    \centering
    \includegraphics[scale=0.8]{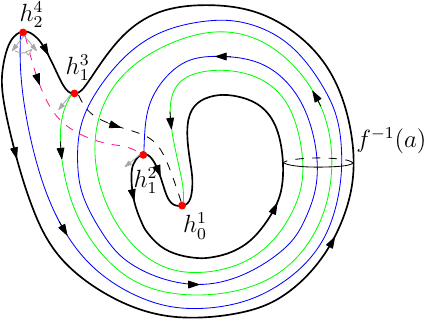}
    \caption{A flow on the torus.}
    \label{fig:my_labelfig}
\end{figure}

 One approach to enrich  the connection matrix is to consider a topological separation of the connecting sets to obtain an additive property of the connection map, as done by McCord  in \cite{MR936812}. However,  in this separation it is not possible to distinguish, for example,  the connections in Figure  \ref{fig:dif_connec}, since both have the same connection maps. Therefore,  one must  consider an algebraic structure capable of capturing more information on those connections, as the Novikov differential does.

The goal in this paper is to define a chain complex  associated to an invariant set $S$, not necessarily isolated,  whose differential  gives  enriched  information on the connections between the Morse sets of $S$.

In order to obtain information on the connecting orbits between critical points, the Novikov differential uses the Novikov ring and counts the orbits on the infinite cyclic covering on $M$. Inspired by the Novikov case, we will look for information about the connections between the Morse sets on the pullback flow defined on a regular cover $(\widetilde{M},p)$ of $M$, providing an algebraic setting that arises from the ambient space in order to distinguish those connections. More specifically,  we use the covering action to distinguish all connections up to action of the covering translation group. For instance,  the two connecting orbits in Figure \ref{fig:dif_connec} are different with respect to the covering action.

\begin{figure}[h!t]
    \centering
    \includegraphics[scale=0.9]{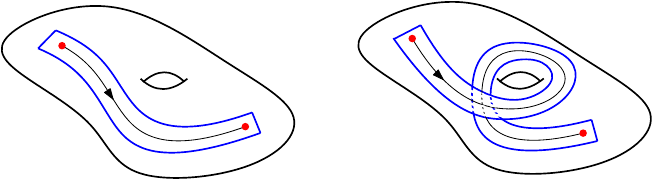}
    \caption{Flows lines on the torus.}
    \label{fig:dif_connec}
\end{figure}

We introduce a chain complex $(NC(S), N\Delta)$ associated to a pair $(S,p)$, where $S$ is an invariant set, $p$ is a regular covering map and $\mathcal{M}(S)$ is an 
  attractor-repeller decomposition of $S$. We will assume coefficients in $\mathbb{Z}((G))$,  where $G$ is the group of covering translations of $p$. The map $N\Delta$ is called a $p$-connection matrix associated to $\mathcal{M}(S)$ and it contains enriched information on the connecting orbits.  One proves the invariance of  this chain complex under equivalent covering spaces.

Whenever $S$  is an isolated invariant set and either $p$ is the trivial covering map or $G$ is projected into the trivial group,  we recover the usual setting of Conley index theory. In other words, the $p$-connection matrix introduced herein coincides with the classical connection matrix defined by Franzosa in \cite{MR978368}.   Moreover, when  $G$ is the  infinite cyclic  group  and $\varphi$ is the negative gradient flow of a circle-valued Morse function, one proves that the Novikov differential of $f$ is a particular  case of the $p$-connection matrix.

This paper is organized as follows:  Section \ref{sec:Background}  recalls relevant elements of the  connection matrix theory, as well as  some basic facts about the Novikov chain complex.
In Section \ref{sec:inv-prop}, we prove some properties of invariant sets of a pullback flow on a regular covering space.
Section \ref{sec:connectionmatrices} is at the heart of the matter, where we introduce the theory of $p$-connection matrices. In Subsection \ref{subsec:4.1}, we define $p$-attractor-repeller decompositions of invariant sets, we prove that $S$ can be decomposed into smaller invariant sets  where we can apply Conley index theory on the pullback flow.
In Subsection \ref{subsec:4.2}, we state the algebraic structure $\mathbb{Z}((G))$ that enables us  to  count the flow lines connecting the Morse sets, distinguishing  orbits  according to the deck transformation group $G$. Moreover, we present the $p$-connection matrices for invariant sets.
% falar do teorema do ewerton
In Section \ref{sec:novikov}, we consider the infinite cyclic covering induced by a circle-valued Morse function, in this case $\mathbb{Z}((G))$ is the Novikov ring and the $p$-connection matrix coincides with the Novikov differential.

\section{Background}\label{sec:Background}

\subsection{Attractor-Repeller Decompositions and Connection Matrices}

Throughout this paper, let $(P,<)$ be a partial ordered set with  partial order $<$, where $P$ is a finite set of indices. An \textit{interval} in $<$ is a subset $I \subseteq P$  such that if $p,q \in I$ and $p < r < q$ then $r\in I$. The set of intervals in $<$ is denoted by $I(<)$.

An \textit{adjacent n-tuple of intervals} in $<$ is an ordered collection  $(I_1,...,I_n)$ of mutually disjoint nonempty intervals in $<$ satisfying: 
\begin{itemize}
\item $\bigcup_{i=1}^n I_i\in I(<);$
\item $\pi\in I_j, \pi'\in I_k,$ $j<k$ imply $\pi'\nless \pi.$
\end{itemize}

The collection of adjacent $n$-tuples of intervals in $<$ is denoted $I_n(<)$. An adjacent 2-tuple of intervals is also called an \textit{adjacent pair} of intervals. If $<'$ is either an extension of $<$ or a restriction of $<$ to an interval in $<$, then $I_n(<')\subseteq I_n(<)$. If $(I,J)$ is an adjacent pair (2-tuple) of intervals, then $I\cup J$ is denoted by $IJ$. If $(I_1,\ldots , I_n)\in I_n(<)$ and $\bigcup_{i=1}^n I_i=I$, then $(I_1,\ldots , I_n)$ is called a decomposition of $I$.

Let $\varphi$ be a continuous flow on a locally compact Hausdorff space $X$ and let $S\subseteq X$ be an invariant set under $\varphi$. We use the notation $x\cdot t:=\varphi(x,t)$.
For any set $Y\subseteq S$, the $\omega$-limit and $\alpha$-limit sets are given by
$\omega (Y)=\cap_{t>0}\overline{Y\cdot[t,\infty)}$ and
$\alpha (Y)=\cap_{t>0}\overline{Y\cdot(-\infty,-t]}$,  respectively. Both sets are closed, and if $S$ is compact then they will be compact.
An invariant set $A\subseteq S$ is an attractor in $S$ if
there exists a $S$-neighborhood $U$ of $A$ such that $\omega (U)=A$.  A repeller in $S$ is an invariant set $R\subseteq S$ such that
there exists a $S$-neighborhood $U$ of $R$ with $\alpha (U) = R$. Whenever $S$ is a compact set then $A$ and $R$ are also compact sets.

A ($<$-ordered) \textit{Morse decomposition} of $S$ is a collection $
\mathcal{M}(S)=\{M(\pi)\ |\ \pi\in P\}
$ of mutually disjoint compact invariant subsets of $S$, 
indexed by a finite set $P$, such that if
 $x\in S\backslash \bigcup_{\pi\in P}M(\pi)$ then there exist $\pi<\pi'$ such that $\alpha(x)\subseteq M(\pi')$ and  $\omega(x)\subseteq M(\pi).$
Each set $M(\pi)$ is called a \textit{Morse set}. A partial order on $P$ with this property induces a partial order on $\mathcal{M}(S)$  called an \textit{admissible ordering} of the Morse decomposition.

The flow defines an
admissible ordering on $\mathcal{M}(S)$, called the \textit{flow ordering} of $\mathcal{M}(S)$, denoted $<_F$, such
that $M(\pi)<_F M(\pi') $ if and only if there exists a sequence of distinct elements
of $P: \pi = \pi_0,\ldots,\pi_n=\pi'$, where the set of connecting orbits between $M(\pi_j)$ and $M(\pi_{j-1})$
$$C(M(\pi_j),M(\pi_{j-1}))=\{x \in S\backslash (M(\pi_j)\cup M(\pi_{j-1})) \mid  \alpha(x)\subseteq M(\pi_j) \text{ and } \omega(x)\subseteq M(\pi_{j-1})\}$$
 is nonempty for each $j = 1, \ldots ,n$. Note that every admissible ordering of $M$ is an extension of $<_F$.

Given a Morse decomposition $\mathcal{M}(S)$ of $S$, the existence of an admissible ordering on $\mathcal{M}(S)$ implies that any recurrent dynamics in $S$ must be contained within the Morse sets, thus the dynamics off the Morse sets must be gradient-like. For this reason, Conley index theory refers to the dynamics within a Morse set as local dynamics and off the Morse sets as global dynamics.

We briefly introduce the Conley index of an isolated invariant set and the connection matrix theory, which addresses this latter aspect.
Recall that   $S\subseteq X$ is an  \textit{isolated invariant set} if there exists a compact set $N\subseteq X$ such that $S \subseteq \mathrm{int} (N)$ and 
$$
S=Inv(N,\varphi)=\{x\in N|\ \varphi(\mathbb{R},x) \subseteq N\}.
$$
In this case $N$ is said
to be an \textit{isolating neighborhood} for $S$ in $X$. Note that isolated invariant sets are compact sets.
An \textit{index pair} for an isolated invariant set  $S$ is  a pair $(N_1, N_0)$  of compact sets $N_0 \subseteq N_1$ such that: 
(i) $S \subseteq \mathrm{int}(N_1\backslash N_0)$ and $\textrm{cl}(N_1\backslash N_0)$  is an isolating neighborhood for $S$;
(ii) $N_0$ is {\it positively invariant} in $N_1$, i.e., given $x \in N_0$ and $t>0$ such that $x \cdot[0, t] \subseteq N_1$, then $x\cdot[0,t]\subseteq N_0$;
(iii) $N_0$ is an {\it exit set} for $N_1$, i.e., given $x \in N_1$ and $t_1 > 0$ with $x\cdot t_1 \notin N_1$, there exists $t_0\in [0, t_1] $ such that $x\cdot [0, t_0]\subseteq N_1$ and $x\cdot  t_0\in N_0$. 
The theorems of existence and equivalence of index pairs guarantee that given any isolating neighborhood  $N \subseteq X$  of   $S$ and  
any neighborhood $U$  of $S$, there exists an index pair $(N_1, N_0)$ for $S$ in $X$ such
that $N_1$ and $N_0$ are positively invariant in $N$ and $cl(N_1 \setminus N_0) \subseteq U$. Moreover,
the homotopy type of the pointed space $N_1/N_0$ is independent of the choice of the
index pair and therefore it only depends on the behavior of the flow near the isolated
invariant set $S$.
For more details, see \cite{MR511133,MR797044}.

The \textit{homology Conley index} of $S$, $CH_\ast(S)$, is the homology of the pointed space $N_1/N_0$, where $(N_1,N_0)$ is an index pair for $S$.
Setting $$M(I)=\bigcup_{\pi\in I}M(\pi)\cup\bigcup_{\pi,\pi'\in I}C(M(\pi'),M(\pi)),$$ the Conley index $CH_\ast(M(I))$ of $M(I)$, in short $H_\ast(I)$, is well defined, since $M(I)$ is an isolated invariant set for all $I\in I(<)$. For more details, see \cite{MR857439}.

The simplest case of a Morse decomposition of a compact invariant set $S$ is an attractor-repeller pair $(A, R)$:  $A$ is an attractor in $S$ and $R= \{x\in S \mid \omega(x) \cap A= \emptyset\}$ is its dual repeller.  Note that, since S is compact then the dual repeller is in fact a repeller, see \cite{MR797044}. Then $S$ is decomposed into $A\cup C(R,A) \cup R$.

Given  an attractor-repeller pair $(A,R)$ of an isolated invariant set $S$, one obtains a long exact sequence, called the attractor-repeller sequence,
which relates the Conley indices of the isolated invariant sets $S, A$ and $R$, namely  $$
\cdots \longrightarrow CH_k(A) \longrightarrow CH_k(S) \longrightarrow CH_k (R) \stackrel{\partial}{\longrightarrow} CH_{k-1}(A) \longrightarrow \cdots.
$$

The map $\partial$, in the previous sequence, is called the {\it connection homomorphism} or {\it connection map}. It has the property that if $\partial\neq 0$ then there exist connecting orbits from $R$ to $A$
in $S$. In many cases, it can give more information about the set of connecting orbits. For instance, if $A$ and $R$ are hyperbolic fixed points of indices $k$ and $k-1$, respectively, satisfying the transversality condition, then the connection map is equivalent to the intersection number between the stable and unstable manifolds of $A$ and $R$, respectively.

For a Morse decomposition $\mathcal{M}(S)$ with an admissible order $(P,<)$, there is an attractor-repeller sequence for every adjacent pair of intervals in $P$.
Franzosa introduced in  \cite{MR978368}
 connection matrices as devices that allow us to encode
simultaneously  the information in all of these sequences. Roughly speaking,  connection matrices are boundary maps defined on the sum of the homology Conley indices of the Morse sets enabling each attractor-repeller sequence to be reconstructed.

 More specifically, consider an upper triangular boundary map $\Delta(P):C_\ast \Delta(P)\rightarrow C_\ast \Delta(P)$ with respect to the partial order $<$. For each interval $I\subseteq P$, set $C_\ast \Delta(I)=\bigoplus_{\pi\in I}CH_\ast (M(\pi))$ and let $\Delta(I)$ be the submatrix of $\Delta(P)$ with respect to the interval $I$. Given an adjacent pair of intervals $I, J$ in $P$, one can construct the following commutative  diagram 
$$\xymatrix{
0 \ar[r] & C_\ast \Delta(I) \ar[r]^i \ar[d]^{\Delta(I)} & C_\ast \Delta(IJ) \ar[r]^p \ar[d]^{\Delta(IJ)}& C_\ast \Delta(J) \ar[r] \ar[d]^{\Delta(J)}& 0\\
0 \ar[r] & C_\ast \Delta(I) \ar[r]^i  & C_\ast \Delta(IJ) \ar[r]^p & C_\ast \Delta(J) \ar[r] & 0\\
}$$
where $i$ and $p$ are the inclusion and projection homomorphisms, respectively. In other words,  one has a short exact sequence of chain complexes where $\Delta$ act as boundary homomorphisms.  
Since 
$(C_\ast \Delta(I),\Delta(I))$ is a chain complex, applying the homological functor $H$, the previous diagram produces a long exact sequence
$$\xymatrix{
\cdots \ar[r]& H_k\Delta(I) \ar[r]^{i_\ast} & H_k\Delta(IJ) \ar[r]^{p_\ast} & H_k\Delta(J) \ar[r]^{[\Delta(J,I)]\ \ } & H_{k-1}\Delta(I) \ar[r] & \cdots .\\
}
$$

Therefore, for every adjacent pair of intervals, the upper triangular boundary map $\Delta(P)$ generates a long exact sequence. $\Delta(P)$ is called a {\it connection matrix} if all  these sequences are canonically isomorphic to the corresponding attractor-repeller sequences. In other words, for each interval $I$, there is an isomorphism $\phi(I):H\Delta(I)\rightarrow CH(M(I))$ such that: $\phi(\{p\})=\textrm{Id}$ for every $p\in P$; and for every adjacent pair of intervals ($I, J$) the following diagram commutes 
$$\xymatrix{
 \cdots \ar[r] & H_k \Delta(I) \ar[r]^{i_\ast} \ar[d]^{\phi(I)} & H_k \Delta(IJ) \ar[r]^{p_\ast} \ar[d]^{\phi(IJ)}& H_k \Delta(J) \ar[r]^{\partial} \ar[d]^{\phi(J)}& H_{k-1}\Delta(I) \ar[d]^{\phi(I)} \ar[r]& \cdots\\
 \cdots \ar[r] & CH_k (M(I)) \ar[r]^{i_\ast}  & CH_k (M(IJ)) \ar[r]^{p_\ast} & CH_k (M(J)) \ar[r]^{\partial} & CH_{k-1}(M(I)) \ar[r]& \cdots\\
}$$

 Franzosa proved in \cite{MR978368} that, given a Morse decomposition $\mathcal{M}(S)$ of an isolated invariant set $S$, there exists a connection matrix for $\mathcal{M}(S)$. Moreover, he showed that nonzero entries in a connection matrix imply the existence of  connecting orbits, that is, if $\Delta(p,q)\neq 0$ then $p<q$, in particular, for the flow defined order $<$ there is a sequence of connecting orbits from $M(q)$ to $M(p)$.

\subsection{Dynamical Chain Complexes }\label{subsec:Nov}

In this subsection we present some  background material on dynamical chain complexes associated to Morse-Smale functions and  to circle-valued Morse functions. The main references for Morse chain complexes are \cite{banyaga2013lectures,  MR1045282, MR2243274} and for Novikov complexes are \cite{ MR2017851, pajitnov2008circle, MR1937024}.

\subsubsection{Morse chain complex}

A {\it Morse-Smale function} $(f,g)$ on a compact manifold $(M,\partial M)$ with boundary (possible empty)
 is a function $f : M \rightarrow \mathbb{R}$   together with a Riemannian metric $g$
 such that
 \begin{enumerate}
 \item  the  critical points are nondegenerate;
 \item $f$ is regular on each boundary component $N$ of  $\partial M$,  i.e.  for all $x \in N$,
$ \nabla f(x) \notin T_xN \subseteq T_xM $;
 \item for any two critical points $p, q \in M$, the stable and unstable manifolds $W^u(p)$ 
and $W^s(q)$ w.r.t the negative gradient flow $\varphi$ of $f$ intersect transversely. 
 \end{enumerate}

Let $Crit_{k}(f)$ be the set of critical points of $f$ with Morse index $k$. Given $p \in Crit_{k}(f)$ and $q\in Crit_{\ell}(f)$, define $\mathcal{M}_{pq} = W^u(p) \cap W^s(q)$, the {\it connecting manifold} of $p$ and $q$ w.r.t  $\varphi$, and 
$\mathcal{M}_p^q = W^u(p) \cap W^s(q) \cap f^{-1}(a)$, the {\it moduli space} of $p$ and $q$, i.e. the space of connecting orbits from $p$ to $q$, where $a$ is some regular value of $f$ with $f(q) < a< f(p)$.
It is well known that $\mathcal{M}_p^q$ is a $(k\!-\!\ell\!-\!1)$-dimensional manifold. Moreover, when $\ell=k-1$, $\mathcal{M}_p^q$ is a zero-dimensional compact manifold, hence it is a finite set.

Fix orientations of $T_p(W^u (p))$, for all $p\in Crit(f)$. Since $W^u(p)$ is contractible, these orientations induce
orientations on the tangent spaces to the whole  unstable manifolds. Also, since $W^s(p)$ is contractible, then the normal space  $ {\mathcal V}_p W^{s}(p)$ is orientable and the orientation of $W^u(p)$ induces an obvious orientation on $ {\mathcal V}_p W^{s}(p)$. %by demanding that .
 Moreover, given $p,q\in Crit(f)$, the transversality  condition implies that 
$T_{{\mathcal M}_{pq}}W^{u}(p)$ splits along ${\mathcal M}_{pq}$ as 
$  T_{{\mathcal M}_{pq}}W^{u}(p) \simeq T{\mathcal M}_{pq} \oplus {\mathcal V}_{{\mathcal M}_{pq}}W^{s}(q),$ where the last term denotes the normal bundle of $W^s(q)$ restricted to $\mathcal{M}_{pq}$. Choose an orientation on $\mathcal{M}_{pq}$  such that this  isomorphism is orientation preserving. Whenever $p\in Crit_{k}(f)$ and $q\in Crit_{k-1}(f)$, the orientation on $\mathcal{M}_{pq}$ gives an orientation on the flow line associated to each $z \in \mathcal{M}_p^q$. In this case, define
 $\epsilon(z):=+1$ if this orientation coincides with the one induced by the flow, otherwise define  $\epsilon(z):=-1$.
Finally, let
$$n(p, q;f) =  \sum_{ z\in \mathcal{M}_p^q}
\epsilon(z) .$$

Given a Morse-Smale function $(f, g) : M \rightarrow \mathbb{R}$, the $\mathbb{Z}$-coefficient {\it Morse group}
is the free $\mathbb{Z}$-module $C_{\ast}(f)=\{C_{k}(f)\}$ generated by the critical points of $f$ and graded by their Morse index, i.e, 
$C_{k}(f) = \mathbb{Z}[Crit_k(f)]$.

The $\mathbb{Z}$-coefficient {\it Morse boundary operator} $\partial$ of $f$  is defined on a generator $p$ by
\begin{eqnarray}
    \partial_k  :   C_{k}(f) & \longrightarrow &  C_{k-1}(f) \nonumber \\ 
     p  & \longmapsto &  \sum_{q \in Crit_{k-1}(f)}n(p, q;f) q . \nonumber
\end{eqnarray}

The pair $(C_{\ast}(f),\partial_{\ast})$ is called  the {\it Morse chain complex} of the Morse-Smale function $(f,g)$.

Salamon proved in \cite{MR1045282} that the Morse boundary operator is a special case of connection matrix. More specifically, considering the $<_{\varphi_f}$-ordered Morse decomposition $\mathcal{M}(M)=\{M_{\pi}\}_{\pi\in P}$ where each Morse set $M_{\pi}$ is a critical point of $f$ and $<_{\varphi_f}$ is the flow ordering, there exists a unique connection matrix for $\mathcal{M}(M)$, which coincides with the Morse boundary operator $\partial$.

\subsubsection{Novikov Chain Complex}\label{sec:Novikov}

Let $\mathbb{Z}[t,t^{-1}]$ be the Laurent polynomial ring. The {\it Novikov ring}  $\mathbb{Z}((t))$ is the  set consisting of all  Laurent series 
$$ \lambda= \sum_{i\in \mathbb{Z}}a_{i} t^{i}  $$
in one variable with coefficients $a_{i} \in \mathbb{Z}$, such that the part  of $\lambda$ with negative exponents  is finite, i.e., there is $n=n(\lambda)$ such that $a_k=0$ if $k<n(\lambda)$. In fact, $\mathbb{Z}((t))$ has a natural Euclidean ring structure such that the inclusion $\mathbb{Z}[t,t^{-1}] \subseteq \mathbb{Z}((t))$ is a homomorphism. 

Let $M$ be a {compact} connected manifold  and $f:M\rightarrow S^{1}$ be a smooth map.
Given a point $x\in M$ and a neighbourhood $V$ of $f(x)$ in $S^{1}$ diffeomorphic to an open interval of $\mathbb{R}$, the map  $f|_{f^{-1}(V)}$ is identified to a smooth map from $f^{-1}(V)$ to $\mathbb{R}$. Hence, one can  define non-degenerate critical points and Morse indices in this context as in the classical case of smooth real-valued functions. A smooth map $f:M\rightarrow S^{1}$ is called a {\it circle-valued Morse function} if its critical points are non-degenerate. Denote by $Crit(f)$ the set of critical points of $f$ and by $Crit_{k}(f)$ the set of critical points of $f$ of index $k$.

 Consider the exponential function $Exp: \mathbb{R} \rightarrow S^{1}$ given by $t\mapsto \mathrm{e}^{2\pi i t}$.    
The structure group of this covering is the subgroup $\mathbb{Z} \subseteq \mathbb{R}$ acting on $\mathbb{R}$
by translations. It is convenient to use the multiplicative notation for the
structure group and denote by t the generator corresponding to $-1$ in the
additive notation. 
 Let $(\overline{M},p_E)$ be the infinite cyclic covering  of $M$, where
  $\overline{M} = f^{\ast}(\mathbb{R}) = \{ (x,t) \in M\times \mathbb{R} \mid f(x) =[t] \in S^{1}  \} $ and
 $p_E:\overline{M}\rightarrow M$ is  induced by the map $f:M\rightarrow S^{1}$ from the universal covering $Exp$. 
 There exists a $\mathbb{Z}$-equivariant Morse-Smale function $F:\overline{M}\rightarrow \mathbb{R}$ which makes the following diagram commutative:
  \vspace{-0.3cm}
 \begin{equation}\nonumber
   \xymatrix{
  \overline{M} \ar[r]^-F \ar[d]_{p_E} & \mathbb{R} \ar[d]^{Exp} \\
  M \ar[r]^-f  & \mathbb{S}^1
  }
 \end{equation}
Note that if $Crit(F)$ is nonempty then it  has  infinite cardinality. Since $\overline{M}$ is noncompact, one can not apply the classical Morse theory to study $F$. To overcome this, one can restrict $F$  to a fundamental cobordism $W$ of $\overline{M}$ with respect to the action of $\mathbb{Z}$.
 The {\it fundamental cobordism} $W$ is defined as 
$ W = F^{-1}([a-1,a]),  $
 where $a$ is a regular value of $F$. It can be viewed  as the compact manifold obtained by cutting $M$ along the submanifold $V=f^{-1}(\alpha)$, where $\alpha = Exp(a)$. Hence, $(W, V,t^{-1}V)$ is a cobordism with both boundary components diffeomorphic to $V$.

 From now on, we consider circle-valued Morse functions $f$ such that  the vector field $-\nabla f$  satisfies the transversality condition, i.e., the lift $-\nabla F$  of $-\nabla f$ to $\overline{M}$ satisfies the classical transversality condition on the unstable and stable manifolds. Denote by $\overline{\varphi}$ the pullback of $\varphi$, where $\varphi$ is the flow associated to  $-\nabla f$.

Fix $\overline{p},\,\overline{q}\in Crit(F)$ lifts of $p, q\in Crit(f)$, respectively.
Choosing arbitrary orientations for all unstable manifolds $W^{u}(p)$ of critical points of $f$, one considers the  induced orientations on the unstable manifolds $W^{u}(t^{\ell}\overline{p})$ and $W^{u}(t^{\ell}\overline{q})$, for $\ell \in \mathbb{Z}$. 
As each path in $M$ that originates
at $p$ lifts to a unique path in $\overline{M}$ with origin $\overline{p}$, the space $\bigcup_{\ell\in\mathbb{Z}}\mathcal{M}({\overline{p},t^{\ell}\overline{q}})$ of flow lines of $\overline{\varphi}$ that join $\overline{p}$ to one of the points $t^{\ell}\overline{q}$, $\ell\in\mathbb{Z}$, is homeomorphic to $\mathcal{M}(p,q)$. In
particular, for $p$ and $q$ consecutive critical points, by the equivariance of $F$, $$n(t^{\ell}\overline{p},t^{\ell}\overline{q};F)=n(\overline{p},\overline{q};F)$$ for all $\ell\in\mathbb{Z}$, 
 where $n(\overline{p},t^{\ell}\overline{q};F)$ is the intersection number between the critical points $\overline{p}$ and $t^{\ell}\overline{q}$ of $F$.

Given $p \in Crit_{k}(f)$ and $q \in Crit_{k-1}(f)$, {\it the Novikov incidence coefficient} between $p$ and $q$ is defined as 
$$ N(p,q;f) = \sum_{\ell \in \mathbb{Z}}n(\overline{p},t^{\ell}\overline{q};F) t^{\ell}. $$
For more details, see \cite{MR2017851} and \cite{pajitnov2008circle}. 

Let $\mathcal{N}_{k}$ be  the $\mathbb{Z}((t))$-module freely generated by the critical points of $f$ of index $k$. Consider the $k$-th boundary operator $\partial^{Nov}_{k}: \mathcal{N}_{k} \rightarrow \mathcal{N}_{k-1}$ which is  defined on a generator $p \in Crit_{k}(f)$ by 
$$ \partial^{Nov}_{k}(p) = \sum_{q\in Crit_{k-1}(f)} N(p,q;f)q $$
and extended to all chains by linearity. In \cite{pajitnov2008circle} it is proved that $\partial^{Nov}_{k}\circ\partial^{Nov}_{k+1}=0$, hence $(\mathcal{N}_{\ast}(f),\partial^{Nov})$ is a chain complex which is called the {\it Novikov chain complex} associated to $f$.

\section{Invariance Properties of Pullback Flows on Covering Spaces}\label{sec:inv-prop}

\vspace{0.4cm}

Consider a metric space $X$ which admits a regular covering space $\widetilde{X}$ with covering map $p:\widetilde{X}\to X$  and let $G$ be the group of the covering translations (deck transformation group).  Thus, the action of $G$ on each fiber is free and transitive and the quotient $\widetilde{X} / G$ can be identified with $X$.
Given a subset $B$ of $X$ and $g\in G$, we will denote by $gB$ the set $\{gb \mid  b\in B\}$ and  if $e\in G$ is the trivial element, $B=eB$.

Let $\varphi: \mathbb{R} \times X \rightarrow X$ be a continuous flow on $X$.
If $\widetilde{X}$ is connected, locally path connected and $\varphi_{\#}\circ (Id\times p)_{\#}(\pi_1(\mathbb{R}\times \widetilde{X}))\subseteq p_{\#}(\pi_1(\widetilde{X}))$, then one can define the pullback flow of $\varphi$ by $p$, denoted by $\widetilde{\varphi}$, as the lifting of the map $\varphi\circ (Id\times p)$. Hence, one has the following  commutative diagram: 
\xymatrix{}
$$
\xymatrixcolsep{3pc}\xymatrix{
(\mathbb{R}\times \widetilde{X},(0,\tilde{x}))\ar[r]^{\ \ \ \ \widetilde{\varphi}} \ar[d]^{Id\times p} & (\widetilde{X},\tilde{x}) \ar[d]^{p}\\
(\mathbb{R}\times {X},(0,x)) \ar[r]^{ \ \ \ \ \varphi} & (X,x)
}
$$

Note that, as a consequence of the unique path lifting property of coverings,
when $p$ restricts to a homeomorphism from some subset $\widetilde{Y}$ of $\widetilde{X}$ onto an invariant subset of $X$, then $\widetilde{Y}$ is also an  invariant set.

If $x\in X$  and $\varphi(\mathbb{R},x)$  is an aperiodic orbit, then the trajectories of the points
of $p^{-1}(x)$ under the flow $\tilde{\varphi}$ are pairwise disjoint and aperiodic, and $p$ restricted
to any such trajectory is one-to-one. See \cite{MR1823962}.

Throughout this paper, let $X$ be  a locally compact metric space and $(\widetilde{X},p)$ be a  regular cover of $X$, where $\widetilde{X}$ is a connected, locally path connected metric space. 
Also, we use the following definition:  a set $U \subseteq X$ is {\it evenly covered} by $p$ if $p^{-1}(U)$ is a disjoint union of sets $\widetilde{U}_\lambda \subset \widetilde{X}$ such that $p_{|\widetilde{U}_\lambda}:\widetilde{U}_\lambda \rightarrow U$ is a  homeomorphism for every $\lambda$.
The homeomorphic copies in $\widetilde{X}$ of an evenly covered set $U$ are called  {\it sheets} over $U$.

The next result is  a characterization of evenly covered sets. 

\begin{proposition}\label{prop:evenly}
Let $S\subseteq X$ be  an evenly covered set. Then there exists $ \widetilde{S} \subset \widetilde{X}$ such that
 $p^{-1}(S)=\displaystyle\bigsqcup_{g\in G} g\widetilde{S}$  and  $p|_{g\widetilde{S}}$ is a homeomorphism, where $G$ is the deck transformation group.
\end{proposition}
\begin{proof} 
Since $S$ is an evenly covered set, there exists a sheet $\widetilde{S} \subset \widetilde{X}$ over $S$ such that  
$p|_{\widetilde{S}}:\widetilde{S}\to S$  is a homeomorphism.
It follows from the  freeness and the transitivity of the action of $G$ in $\widetilde{X}$ that $p^{-1}(S)=\displaystyle\bigsqcup_{g\in G} g\widetilde{S}$.
Moreover,  given a deck transformation  $g\in G$, one  has that 
$p|_{g\widetilde{S}}
= p|_{\widetilde{S}} \circ g^{-1}|_{g\widetilde{S}}$ is a homeomorphism.
\end{proof}

The next  result gives an important property of evenly covered sets on a regular covering space which is essential along this work. 

\begin{theorem}\label{prop_W}
Let $S\subseteq X$ be  an evenly covered compact set. Given a sheet $\widetilde{S}$ over $S$,
there exists a  neighborhood $W$ of $\widetilde{S}$ such that $p_{|_W}$ is a homeomorphism onto its image. 
\end{theorem}
	
	\begin{proof}

It is sufficient  to prove that there exists a compact neighborhood $W$ of $\widetilde{S}$ such that  $p_{|_W}$ is injective.
	
By Proposition \ref{prop:evenly}, $p^{-1}(S) = \bigsqcup_{g\in G} g\widetilde{S}$. Let $\mathcal{B}=\Big( \bigsqcup_{g\in G} g\widetilde{S} \Big) \setminus \widetilde{S}$. First,  we prove that there exist closed  disjoint neighborhoods of $ \mathcal{B}$ and $\widetilde{S}$.
	
	\vspace{0.2cm}
	
\noindent	\underline{Claim 1:} $\mathcal{B}$ is a closed set.

If $\mathcal{B} =\emptyset$, the claim holds. Suppose that $\mathcal{B} \neq\emptyset$. Let $x\in \overline{\mathcal{B}} $ and let  $(x_n)_{n\in \mathbb{N}} $ be a sequence in $\mathcal{B}$ such that $x_n \rightarrow x$. Then  $(p(x_n))_{n\in \mathbb{N}} $ is a sequence in $S$ and $p(x_n)\to p(x)$. Since $p^{-1}(S)$ is a closed set, then $x\in p^{-1}(S)$. Suppose $x\in \widetilde{S}$. Let $V_x$ and $U_{p(x)}$ be  neighborhoods of $x$ and $p(x)$, respectively, such that $p:V_x\to U_{p(x)}$ is a homeomorphism.
Then there is $n_0 \in \mathbb{N}$ such that $x_n\in V_x$ for all $n>n_0$. Moreover, since $p_{|_{\widetilde{S}}}:\widetilde{S}\to S$ is a homeomorphism, there exists a sequence $(x_n^{\prime})_{n\in \mathbb{N}}$ in $\widetilde{S}$ such that $p(x_n)=p(x_n^{\prime})$ and  $x_n^{\prime} \rightarrow x$. Then there is $n_1 \in \mathbb{N}$ such that $x_n^{\prime}\in V_x$ for all $n>n_1$.
Thus for all $n>\mbox{max}\{n_0,n_1\}$, $x_n \neq x_n^{\prime}$ and $p(x_n)= p(x_n^{\prime})$.
This contradicts the fact that $p_{{|}_{V_x}}$ is a homeomorphism, hence $x\notin \widetilde{S}$. Consequently, $x\in \mathcal{B}$ and $\mathcal{B}$ is a closed set. $\square$

As a consequence of Claim 1 and the normality of $\widetilde{X}$, there exist closed neighborhoods  $F_{\widetilde{S}}$ and $F_{\mathcal{B}}$ of $\widetilde{S}$ and $ \mathcal{B}$, respectively, such that $F_{\widetilde{S}} \cap F_{\mathcal{B}} =\emptyset$.

Since $\widetilde{X}$ is locally  compact, for each $\tilde{s}\in \widetilde{S}$, there is  a  compact neighborhood   $V_{\tilde{s}}$  of $\tilde{s}$
such that $V_{\tilde{s}}\subset F_{\widetilde{S}}$ and $U_s:=p(V_{\tilde{s}})$ is an evenly covered neighborhood of $s:=p(\tilde{s})$. By  the compactness  of $\widetilde{S}$, there are $\tilde{s}_1,\ldots,\tilde{s}_{\ell} \in \widetilde{S}$ such that $ \{\text{int}(V_{\tilde{s}_i})\}_{i=1}^{\ell}$ is a finite open cover  of $\widetilde{S}$ and hence $ \{V_{\tilde{s}_i}\}_{i=1}^{\ell}$ is a finite compact cover  of $\widetilde{S}$, which will be denoted by $\{V_i\}_{i=1}^{\ell}$.  Note that, the correspondence between the collections $\{U_i\}_{i=1}^{\ell}$ and $\{V_i\}_{i=1}^{\ell}$ is bijective given that $\widetilde{S}$ is homeomorphic to $S$ via $p$.

Consider the sets  $A_{ij}=\{y\in V_i\backslash V_j \ | \ \exists x\in V_j \text{ such that } p(x)=p(y)\}$, for $i,j \in \{1,\ldots,\ell\}$. Since $A_{ij}\subset F_{\widetilde{S}}$ then $A_{ij}\cap \widetilde{S}=\emptyset$.

\noindent \underline{Claim 2:}  $\widetilde{S} \cap \overline{A}_{ij}=\emptyset$, for all $i,j$.
	
 In fact,  suppose  $\overline{A}_{ij}\cap \widetilde{S} \neq \emptyset$, thus there is a sequence $(y_n)_{n\in \mathbb{N}}$ in $ A_{ij}$ such that $y_n \rightarrow \tilde{s}$, for some $\tilde{s}\in \widetilde{S}$. 
 By the  definition of $A_{ij}$, there exists a sequence $(x_n)_{n\in \mathbb{N}}$ in $V_j$ such that $x_n\neq y_n$ and $p(x_n)=p(y_n)$, hence
$p(y_n) = p(x_n) \rightarrow p(\tilde{s})$. Since $(x_n)_{n\in \mathbb{N}}$ is a sequence in the compact set $V_j$, taking a subsequence if  necessary, one can assume that $(x_n)_{n\in \mathbb{N}}$ convergences to a certain $b\in p^{-1}(s)$, where $s:=p(\tilde{s})$. Since $b\in V_j\subset F_{\widetilde{S}}$ then $b=\tilde{s}$. 
Since $p$ is locally injective and both sequences $(y_n)_{n\in \mathbb{N}}$ and $  (x_n)_{n\in \mathbb{N}}$ converge to $\tilde{s}$, there exists $n_0\in\mathbb{N}$ such that $x_n=y_n$ for all $n>n_0$. This contradicts the fact that $x_n\neq y_n$, for all $n$. Therefore  $\widetilde{S} \cap \overline{A}_{ij}=\emptyset$, for all $i,j$.
$\square$
	
As a consequence of Claim 2 and the normality of $\widetilde{X}$, there exist closed neighborhoods  $F_{ij}^{\widetilde{S}}$ and $F_{ij}^{A}$ of  $\widetilde{S}$ and $ \overline{A}_{ij}$, respectively, such that $F_{ij}^{\widetilde{S}} \cap F_{ij}^{A} =\emptyset$.	
	
Finally, consider $W=(\cup_{i=1}^{\ell} V_i)\cap(\cup_{i,j=1}^{\ell} F_{ij}^{\widetilde{S}})$\ the compact neighborhood of $\widetilde{S}$.	
Since $W$ is compact and $p_{|_{W}}$ is injective, then $p_{|_{W}}$ is a homeomorphism.
\end{proof}

 Although in the proof of Theorem \ref{prop_W} one extends the homeomorphism $p:\widetilde{S} \rightarrow S$  to a  compact neighborhood  $W$ of $\widetilde{S}$, one could also extend it to an open neighborhood of $\widetilde{S}$.

The following result is a direct consequence of the previous theorem.

\begin{proposition}\label{prop:Dahisymon}

Let $S\subseteq X$ be  an evenly covered compact set. Given a sheet $\widetilde{S}$ over $S$, $S$ is an isolated invariant set iff $\widetilde{S}$ is an isolated invariant set. Moreover,  in this case the homology Conley indices of $S$ and $\widetilde{S}$ coincide, i.e. $CH_{\ast}(S) = CH_{\ast}(\widetilde{S})$.
\end{proposition}
\begin{proof}
By Theorem \ref{prop_W}, there exists a neighborhood $W$ of $\widetilde{S}$ such that $p_{|_W}$ is a homeomorphism. Since $\varphi\circ (Id\times p)= p\circ \widetilde{\varphi}$, one has that $p\circ\widetilde{\varphi}(t,\tilde{x})=\varphi(t,p(\tilde{x}))$, which implies that the flows ${\varphi}$ and $\widetilde{\varphi}$ restricted to $W$ are topologically equivalent by $p_{|_W}$.
Hence, $\widetilde{S}$ is an isolated invariant set iff $S$ is an isolated invariant set.  
The isomorphism between the homology Conley indices of $S$ and $\widetilde{S}$ follows from the existence of a neighbourhood
basis of index pairs for an isolated invariant set.
\end{proof}

 The invariant sets considered in the classical Conley theory (e.g. \cite{MR511133, MR857439, MR978368, MR797044})
are isolated invariant sets and hence compact. As a consequence, the attractors and repellers in theses sets are always compact. Since the goal in  this work is to study invariant sets which are not necessarily compact, one considers the following definitions of attractors and repellers.
Given $S$ an invariant set not necessarily compact,
a compact invariant set $A \subseteq S$ is an attractor in $S$ if  there exists an open neighborhood $U$ of $A$ in $S$ such that $A = \omega(U)$. A compact invariant set $R \subseteq S$ is a repeller in $S$ if  there exists an open neighborhood $U$ of $R$ in $S$ such that $R = \alpha(U)$.

\begin{proposition}\label{prop:Ewertonmon}

Let $S$ be an invariant set. 
Given  $A\subseteq X$   an evenly covered attractor in $S$, 
if  $\widetilde{A}$ is a sheet over $A$, then $\widetilde{A}$ is an attractor in $p^{-1}(S)$. Moreover $CH_{\ast}(A) = CH_{\ast}(\widetilde{A})$.
\end{proposition}

\begin{proof}
Since $A$ is an evenly  covered compact set, by Theorem \ref{prop_W}, there exists a neighborhood $W$ of $\widetilde{A}$ such that $p_{|_W}$ is a homeomorphism. The proof of Proposition \ref{prop:Dahisymon} shows that  the flows ${\varphi}$ and $\widetilde{\varphi}$ are topologically equivalent in $W$ by $p_{|_W}$. Therefore, $\widetilde{A}$ is an attractor in $p^{-1}(S)$.
\end{proof}

Analogously, if  $R$ is an 
evenly covered repeller in $S$, 
and  $\widetilde{R}$ is a sheet over $R$, then $\widetilde{R}$ is a repeller in $p^{-1}(S)$ and $CH_{\ast}(R) = CH_{\ast}(\widetilde{R})$.

\begin{proposition} 
Let $S\subseteq X$ be  an evenly covered compact set and  $\widetilde{S}$ a sheet over $S$.   

\begin{enumerate}
    \item  If $(A,R)$ is 
 an attractor-repeller pair of ${S}$, then $(p^{-1}(A)\cap \widetilde{S},p^{-1}(R)\cap \widetilde{S})$ is 
 an attractor-repeller pair of $\widetilde{S}$;
    
    \item If $(\widetilde{A},\widetilde{R})$ is 
 an attractor-repeller pair of $\widetilde{S}$, then $(p(\widetilde{A})\cap {S},p(\widetilde{R})\cap {S})$ is 
 an attractor-repeller pair of ${S}$.
\end{enumerate}
\end{proposition}

\begin{remark} 

Assuming that $S \subseteq X$ is an isolated invariant set does not necessarily  imply that $p^{-1}(S)$ is also an isolated invariant set.  For instance, consider the flow
on the torus $\mathbb{T}^2$ and the corresponding pullback flow on the infinite cyclic covering $\widetilde{\mathbb{T}}^2$, as in Figure  \ref{fig:exe1}. The lift of the periodic orbit  $\mathcal{O}$ is not an isolated invariant set.

\begin{figure}[h!t]
    \centering

    \includegraphics[scale=0.75]{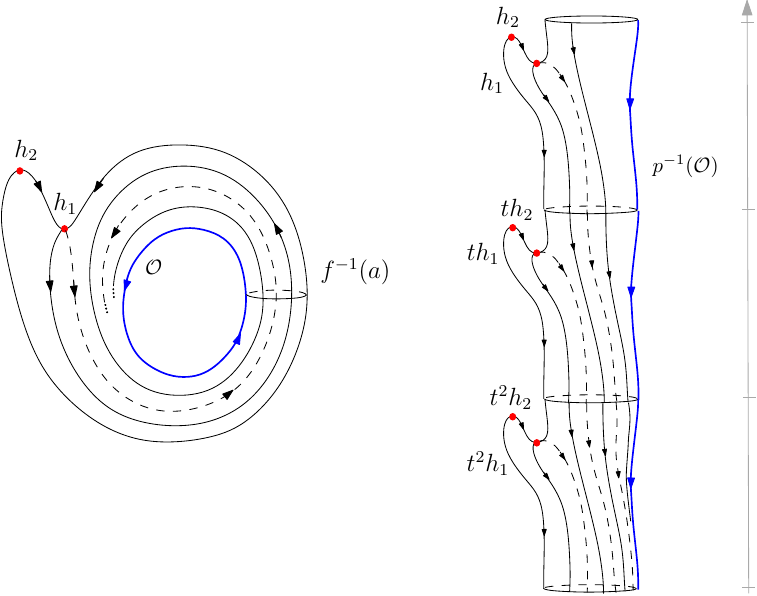}
    \caption{Infinite cyclic covering of the torus $\mathbb{T}^2$: $p^{-1}(\mathcal{O})$ is not isolated. }
    \label{fig:exe1}
\end{figure}

\end{remark}

The next proposition guarantees that all the properties of a given set $\widetilde{S} \subseteq \widetilde{X}$ which we are interested in, such as invariance and isolation,  are preserved by equivalent coverings of $X$.

\begin{proposition}\label{prop:equivariant}
Let 
$(\widetilde{X}_{i},p_i)$ be equivalent regular coverings of $X$, where $\widetilde{X}_i$ is a connected, locally path connected metric space, for $i=1,2$. Let $S\subseteq X$ be  an evenly covered set and  $\widetilde{S}$ a sheet over $S$ with respect to $p_1$.   Let $h:\widetilde{X}_2\rightarrow \widetilde{X}_1$ be a  homeomorphism which provides an equivalence of the covering spaces. 
\begin{enumerate}
\item The set $S$ is evenly covered with respect to $p_2$.
  \item If $\widetilde{S}$ is an  invariant set in $\widetilde{X}_1$, then $h^{-1}(\widetilde{S})$ is an  invariant set in $\widetilde{X}_2$.
      \item If $\widetilde{S}$ is compact and $(\widetilde{A},\widetilde{R})$ is 
 an attractor-repeller pair of $\widetilde{S}$, then $ (h^{-1}(\widetilde{A}),h^{-1}(\widetilde{R}) )$
is 
 an attractor-repeller pair of $h^{-1}(\widetilde{S})$. Moreover,  $CH_{\ast}(\widetilde{A})= CH_{\ast}(h^{-1}(\widetilde{A}))$ and $CH_{\ast}(\widetilde{R})= CH_{\ast}(h^{-1}(\widetilde{R}))$.
    \item If $\widetilde{S}$ is an isolated invariant set in $\widetilde{X}_1$, then $h^{-1}(\widetilde{S})$ is an isolated invariant set in $\widetilde{X}_2$ and $CH_{\ast}(\widetilde{S})= CH_{\ast}(h^{-1}(\widetilde{S}))$.

\end{enumerate}
\end{proposition}

\begin{proof} The proof of (1) is straightforward. Now, consider the  following diagram  (where we omit the basepoints)
\xymatrix{}
\begin{equation}
\xymatrixcolsep{3pc}\xymatrix{
\mathbb{R}\times \widetilde{X}_{2}\ar[r]^{\quad \tilde{\varphi}_{2}} \ar[d]^{Id\times h} & \widetilde{X}_{2} \ar[d]^{h}\\
\mathbb{R}\times \widetilde{X}_{1} \ar[r]^{\quad \tilde{\varphi}_{1}} & \widetilde{X}_{1}
}\nonumber
\end{equation}
This diagram is commutative. In fact, 
\begin{eqnarray}
 p_{1} \circ h\circ \tilde{\varphi}_2 \circ (Id\times h^{-1}) & =&   p_{2}\circ \tilde{\varphi}_2 \circ (Id\times h^{-1}) \nonumber\\
 & = &  \varphi \circ (Id\times p_2) \circ  (Id\times h^{-1}) \ = \ \varphi \circ (Id\times p_1), \nonumber
\end{eqnarray}
 which implies, by the uniqueness of the lifting  $\tilde{\varphi}_{1}$, that $\tilde{\varphi}_{1} = h \circ \tilde{\varphi}_{2}\circ (Id \times h^{-1})$.

If $\widetilde{S}\subseteq \widetilde{X}_1$ is an invariant set (resp., isolated invariant set) w.r.t. $\tilde{\varphi}_1$ then  $h^{-1}(\widetilde{S})$ is  also an invariant set (resp., isolated invariant set)  under $\tilde{\varphi}_2$, by the commutativity of the diagram above. This proves itens (2) and (4).

In order to prove item (3), it is sufficient to show that $\omega(h(\widetilde{U}))=h(\omega(\widetilde{U}))$ and $\alpha(h(\widetilde{U}))=h(\alpha(\widetilde{U}))$, for all $\widetilde{U}\subseteq \widetilde{X}_2$. One has that

\begin{eqnarray}
\omega(h(\widetilde{U})) & = &  \bigcap_{t>0} \overline{\tilde{\varphi}_1([t,\infty),h(\widetilde{U}))} \nonumber\\
 & = &  \bigcap_{t>0} \overline{ h \circ \tilde{\varphi}_{2}\circ (Id \times h^{-1})([t,\infty),h(\widetilde{U}))}\nonumber\\
 & = & h\left( \bigcap_{t>0} \overline{ \tilde{\varphi}_{2}([t,\infty),\widetilde{U})}\right)=h(\omega(\widetilde{U})).\nonumber
\end{eqnarray}

Analogously, one proves that $\alpha(h(\widetilde{U}))=h(\alpha(\widetilde{U}))$.
\end{proof}

\section{$p-$Connection Matrices}\label{sec:connectionmatrices}

In this section we will define a $p-$connection matrix for a $p-$Morse decomposition of an invariant set $S$. Its entries are homomorphisms which give dynamical information on the connecting orbits between  $p-$Morse sets.
In this setting, we only assume that $S$ is an invariant set (possibly noncompact), dropping the assumption that $S$ is isolated, even though the $p-$Morse sets are considered to be isolated invariant sets (hence, compact sets).

In Subsection \ref{subsec:4.1}, one defines a $p$-attractor-repeller pair $(A,R)$ for $S$ as an attractor-repeller decomposition of $S$
such that $A$ and $R$ are disjoint $p$-evenly covered isolated invariant sets, see Definition \ref{def:evenlycovered}.
Despite the fact that $S$ may not be compact or $p$-evenly covered, one proves that, under some additional hypothesis, $S$ can be decomposed  into smaller invariant sets ${S}_{{R},g{A}}$ which are compact evenly covered sets, see Theorem \ref{teo:Marimon}. 
In Subsection \ref{subsec:4.2}, one defines a $p$-connection matrix for a $p$-attractor-repeller decomposition of an invariant set and one proves its  invariance under equivalent regular covering spaces. {Moreover, for the case of isolated invariant sets, one establishes in Theorem \ref{thm_proj} the relation between \textit{p-connection matrices} and the classical connection matrices presented in \cite{MR978368}, showing that the \textit{p-connection matrix} generalizes the classical one.}
In Subsection \ref{sec:p-morsedec}, 
a manner to use this theory to obtain information on the connections between Morse sets in a more general $p$-Morse decomposition is presented.  More specifically, given  a $p$-Morse decomposition of $S$, one looks at the maps between the p-Morse sets which are adjacent.
In Subsection \ref{subsec:examples}, one presents some examples to illustrate the results obtained in the previous subsections.

\subsection{$p$-Attractor-Repeller Decomposition for Invariant Sets}\label{subsec:4.1}

It is well known that, when $S$ is compact, each orbit has nonempty $\alpha$ and $\omega$-limit sets. However, this is not always the case when $S$ is noncompact. For instance,  the flow on $\mathbb{R}^2$ as in Figure \ref{fig:5} has a flow line $\gamma$ whose $\alpha$ and $\omega$-limits are empty.  In this case, if we consider the usual definition of connection between two invariant sets, then in Figure \ref{fig:5} the orbit $\gamma$ would be a connection between $R$ and $A$. In order to discard connections of these types, we restrict our analysis to the connecting orbits that have nonempty $\alpha$ and $\omega$-limit sets.

\begin{figure}[h!t]
    \centering
    \includegraphics[scale=0.7]{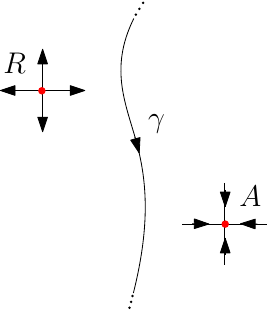}
    \caption{An orbit $\gamma$ with $\alpha(\gamma)=\emptyset$ and $\omega(\gamma)=\emptyset$ that is not a connection between $R$ and $A$.}
    \label{fig:5}
\end{figure}

Recall that  $X$ is a locally compact metric space, $\varphi: \mathbb{R} \times X \rightarrow X$ is a continuous flow on $X$ and $(\widetilde{X},p)$ is a  regular cover of $X$, where $\widetilde{X}$ is a connected, locally path connected metric space.  Let $S$ be an invariant set in $X$. Given  $A$ and $R$  invariant sets of $S$, the set of connections between $A$ and $R$ is defined by 
    $$C^{\ast}(R,A)=\{x \in S\backslash (A\cup R) \mid \alpha(x)\neq \emptyset, \omega(x)\neq \emptyset , \alpha(x)\subseteq R \text{ and } \omega(x)\subseteq A\}.$$

    \begin{definition}\label{def:evenlycovered}
Given an invariant set $S$ in $X$, a  pair of disjoint compact invariant sets $(A,R)$ is a $p$-attractor-repeller pair for $S$ if 
\begin{enumerate}
    \item  $A$ and $R$ are evenly covered; 
    \item $A$ is an attractor  in $S$ and  $R$ is a  repeller in $S$;
    \item given $x\in S$, then either $x\in A$ or $x\in R$ or $x\in C^{\ast}(R,A).$
\end{enumerate}
 The decomposition $S={R} \cup C^{\ast}({R},{A}) \cup {A}$ is called a $p$-attractor-repeller decomposition of $S$. 
\end{definition}

Given a $p$-attractor-repeller pair for $S$ and  $A^{\ast} := \{ x\in S \mid \omega(x) \cap A =\emptyset\}$, note that  $R = A^{\ast}$.

\vspace{0.2cm}

It is clear that there exist invariant sets which do not admit  $p$-attractor-repeller decompositions for any covering $p$.

\vspace{0.2cm}

 \begin{remark}\textcolor[rgb]{1,1,1}{.}
 If $S$ is  an isolated invariant set, one has that $C^{\ast}(R,A) = C(R,A)$. Hence,  a p-attractor-repeller pair of $S$ is an attractor-repeller pair of $S$ in the usual sense as defined in \cite{MR511133, MR978368, MR797044}.
    \end{remark}
 
Note that, in the previous definition, it is not required that $S$ is compact. In this paper, we are interested in an  attractor-repeller decomposition such that  the deck transformation group ``acts freely and transitively" on the attractors and repellers.
This property, which is a consequence of Proposition \ref{prop:evenly}, is stated in the next result.

\begin{proposition}
Given a $p$-attractor-repeller pair $(A,R)$ for an invariant set $S$, there exist compact invariant sets $\widetilde{A}, \widetilde{R} \subseteq \widetilde{X}$ such that
\begin{enumerate}
\item $p^{-1}(A)=\displaystyle\bigsqcup_{g\in G} g\widetilde{A}$  and  $p|_{g\widetilde{A}}$ is a homeomorphism;
\item $p^{-1}(R)=\displaystyle\bigsqcup_{g\in G} g\widetilde{R}$  and $p|_{g\widetilde{R}}$ is a homeomorphism,
\end{enumerate}
where $G$ is the deck transformation group.
\end{proposition}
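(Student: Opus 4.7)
The plan is to combine two ingredients: the decomposition into sheets granted by the even-covering hypothesis, and the fact (recalled at the opening of Section \ref{sec:inv-prop}) that the deck transformation group $G$ acts freely and transitively on every fiber of the regular covering $p$. Since $A$ is evenly covered, one has a disjoint decomposition $p^{-1}(A)=\bigsqcup_{i\in I}\widetilde{A}_i$ with each $p|_{\widetilde{A}_i}$ a homeomorphism onto $A$. I would fix any one index $i_0$, set $\widetilde{A}:=\widetilde{A}_{i_0}$, and then show that the family $\{g\widetilde{A}\}_{g\in G}$ is itself a decomposition of $p^{-1}(A)$ of the required form. The same construction applied to $R$ produces $\widetilde{R}$.

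The verification splits into three short checks. First, each $p|_{g\widetilde{A}}$ is a homeomorphism, because $g\colon \widetilde{X}\to \widetilde{X}$ is a homeomorphism and $p\circ g=p$, so $p|_{g\widetilde{A}}=(p|_{\widetilde{A}})\circ (g^{-1}|_{g\widetilde{A}})$ is a composition of homeomorphisms onto $A$. Second, the family is pairwise disjoint: if $g_1\tilde{a}_1=g_2\tilde{a}_2$ with $\tilde{a}_1,\tilde{a}_2\in \widetilde{A}$, applying $p$ gives $p(\tilde{a}_1)=p(\tilde{a}_2)$, so injectivity of $p|_{\widetilde{A}}$ forces $\tilde{a}_1=\tilde{a}_2$, and then the freeness of the $G$-action on the fiber through $\tilde{a}_1$ forces $g_1=g_2$. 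Third, the family covers $p^{-1}(A)$: given $\tilde{b}\in p^{-1}(A)$, set $\tilde{a}:=(p|_{\widetilde{A}})^{-1}(p(\tilde{b}))$; the points $\tilde{a}$ and $\tilde{b}$ lie in the common fiber $p^{-1}(p(\tilde{b}))$, so transitivity of the $G$-action supplies $g\in G$ with $g\tilde{a}=\tilde{b}$, whence $\tilde{b}\in g\widetilde{A}$.

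I do not anticipate any real obstacle; the argument is essentially a repackaging of the standard bijection, for regular coverings, between the sheets over a point and the elements of $G$. The only subtlety worth articulating in the write-up is that the definition of \emph{evenly covered} supplies \emph{some} decomposition without canonically labelling the sheets by group elements, so one must choose a distinguished sheet $\widetilde{A}$ as a basepoint and let $G$ act to reproduce the whole family in the form $g\widetilde{A}$.
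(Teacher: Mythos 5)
Your proof is correct and takes essentially the same approach as the paper's: fix one sheet $\widetilde{A}$ of the even cover of $A$ and use the free, fiberwise-transitive $G$-action to produce the family $\{g\widetilde{A}\}_{g\in G}$. In fact your write-up is slightly more careful than the paper's. The paper asserts directly that the sheets $\widetilde{A}_\lambda$ supplied by the even-covering hypothesis are permuted by $G$, so that any two sheets differ by a deck transformation. This is automatic when $A$ is connected (the sheets are then connected components of $p^{-1}(A)$), but an attractor or repeller in this setting may be disconnected, and then an arbitrary decomposition of $p^{-1}(A)$ into homeomorphic copies of $A$ need not be $G$-equivariant: a sheet can mix up the fibers over different components of $A$ in a way that is not respected by the deck action. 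Your argument sidesteps this subtlety by fixing one sheet and building the $G$-equivariant decomposition $\{g\widetilde{A}\}_{g\in G}$ from scratch, then verifying the three needed properties (each $p|_{g\widetilde{A}}$ is a homeomorphism because $p\circ g=p$; the translates are pairwise disjoint by injectivity of $p|_{\widetilde{A}}$ together with freeness of the action; and they cover $p^{-1}(A)$ by transitivity on fibers). This makes the claim hold with no connectivity hypothesis on $A$, which is exactly the generality the proposition needs.
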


It is well known that, if $S$ is a compact set and $Y\subseteq S$, then $\omega(Y)$ and $\alpha(Y)$
are compact invariant subsets of $S$. Furthermore, if $U$
is a neighborhood of $\omega(Y)$, then there exists  $t>0$ such that $Y.[t ,\infty)\subseteq U$. A
similar statement holds for $\alpha(Y)$. Whenever $S$ is not compact, these properties do not necessarily hold. See example in Figure \ref{fig:Contra_ex_4}. 
However, one can still retrieve some nice properties for subsets of $S$ which admit compact neighborhoods. The next proposition  states a property of $\alpha(Y)$ and $\omega(Y)$  when $S$ is not necessarily compact.

\begin{lemma}\label{lem:claim1}
Let $S$ and $Y$ be invariant sets in $X$ such that $Y\subseteq S$. If there exists a compact set $K$ in $X$ such that $Y\subseteq K\subseteq S$, then for every open neighborhood $U$ of $\omega(Y)$ there exists $t$ such that $Y.[t,\infty)\subseteq U$.  \end{lemma}
\begin{proof}
 Assume that the claim is false. Then there is an open neighborhood $U$ of $\omega(Y)$, a sequence
of points $y_n \in Y$  and a sequence $t_n \in \mathbb{R}$ with $t_n \rightarrow \infty$ such that $y_n . t_n \notin U$ for all $n$. Since $K$
is sequentially compact and $y_n. t_n \notin U$ for all $n$, there is a subsequence of $y_n . t_n$ that converges to some point $z \in K$ with
$z \notin \omega(Y)$. However, $z \in \overline{Y.[t,\infty)}$ for all $t > 0$ and hence
$z \in \omega(Y) =\displaystyle \bigcap_{t>0}  \overline{Y.[t,\infty)} 
$.
This contradiction establishes the result.
\end{proof}

The following theorem is a generalization of the Path Lifting Theorem for orbits which are contained in a compact set.

\begin{theorem}[Lifting of orbits]\label{teo:SVL} 
Let  $(A,R)$ be a
$p$-attractor-repeller pair for an invariant set $S$. Let $\gamma$ be an orbit of $\varphi$ such that $\alpha(\gamma)\subseteq R $,  $\omega(\gamma)\subseteq A$ and there is a compact set $U\subseteq X$ containing $\gamma$. Fixing sheets  $\widetilde{R}$ over $R$  and $\widetilde{A}$ over $A$, there exist a unique $g\in G$ and a unique orbit $\widetilde{\gamma}$ of $\widetilde{\varphi}$ such that $\alpha(\widetilde{\gamma})\subseteq \widetilde{R}$,   $\omega(\widetilde{\gamma})\subseteq g\widetilde{A}$ and $p(\widetilde{\gamma})=\gamma$.
 \end{theorem}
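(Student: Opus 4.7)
The plan is to construct $\widetilde{\gamma}$ explicitly near the $\alpha$-limit using a single sheet of $p$, extend it via path lifting, and then identify the unique sheet of $p^{-1}(A)$ that captures its $\omega$-limit. First I would apply Theorem~\ref{prop_W} to the homeomorphisms $p|_{\widetilde{R}}\colon\widetilde{R}\to R$ and $p|_{\widetilde{A}}\colon\widetilde{A}\to A$ to obtain open neighborhoods $W_R\supseteq\widetilde{R}$ and $W_A\supseteq\widetilde{A}$ on which $p$ is a homeomorphism onto its images $V_R:=p(W_R)$ and $V_A:=p(W_A)$. A short argument using the freeness of the $G$-action shows that the translates $\{gW_R\}_{g\in G}$ are pairwise disjoint and exhaust $p^{-1}(V_R)$, and identically for $W_A$; thus $V_R$ and $V_A$ are evenly covered open neighborhoods of $R$ and $A$ in $X$, with $\widetilde{R}$ and $\widetilde{A}$ sitting in the distinguished sheets $W_R$ and $W_A$.

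Since $\alpha(\gamma)\subseteq R\subseteq V_R$ and $\omega(\gamma)\subseteq A\subseteq V_A$ (both limit sets nonempty because $\gamma$ lies in the compact set $U$), there exist $T_1<T_2$ with $\gamma(t)\in V_R$ for $t<T_1$ and $\gamma(t)\in V_A$ for $t>T_2$. Set $\widetilde{\gamma}(t):=(p|_{W_R})^{-1}(\gamma(t))$ on $(-\infty,T_1)$; by the path lifting property of the covering, realized by the pullback flow $\widetilde{\varphi}$, this segment extends uniquely to a continuous curve $\widetilde{\gamma}\colon\R\to\widetilde{X}$ with $p\circ\widetilde{\gamma}=\gamma$, and the uniqueness of lifts forces $\widetilde{\gamma}$ itself to be an orbit of $\widetilde{\varphi}$. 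Any $\tilde{r}\in\alpha(\widetilde{\gamma})$ is a limit of $\widetilde{\gamma}(t_n)=(p|_{W_R})^{-1}(\gamma(t_n))$ for some $t_n\to-\infty$; continuity of $(p|_{W_R})^{-1}$ on $V_R$, together with $\gamma(t_n)$ accumulating on $R$, yields $\tilde{r}\in\widetilde{R}$. For the forward direction, $\widetilde{\gamma}([T_2,\infty))$ is a connected subset of the disjoint union $p^{-1}(V_A)=\bigsqcup_{g\in G}gW_A$, so it lies entirely in a single sheet $g_0W_A$ for a uniquely determined $g_0\in G$; the same continuity argument applied to the homeomorphism $p|_{g_0W_A}$ gives $\emptyset\neq\omega(\widetilde{\gamma})\subseteq g_0\widetilde{A}$, and we set $g:=g_0$.

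For uniqueness, any other orbit of $\widetilde{\varphi}$ projecting onto $\gamma$ is of the form $h\widetilde{\gamma}$ for some $h\in G$. The requirement $\alpha(h\widetilde{\gamma})=h\cdot\alpha(\widetilde{\gamma})\subseteq\widetilde{R}$, combined with $h\widetilde{R}\cap\widetilde{R}=\emptyset$ for $h\neq e$ and $\alpha(\widetilde{\gamma})\neq\emptyset$, forces $h=e$; this pins down the orbit $\widetilde{\gamma}$, and since $\omega(\widetilde{\gamma})$ is nonempty and the translates $\{g\widetilde{A}\}_{g\in G}$ are disjoint, it also pins down $g$. The step I expect to be most delicate is the upgrade from Theorem~\ref{prop_W}, which only produces a single-sheet neighborhood of $\widetilde{A}$, to the fully evenly covered neighborhood $V_A$ of $A$; the connectedness argument trapping $\widetilde{\gamma}([T_2,\infty))$ in a single sheet $g_0W_A$ relies precisely on this disjoint-sheet structure, so that Step~1 is the real content of the proof and everything afterwards is standard covering-space bookkeeping.
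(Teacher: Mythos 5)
Your proposal is correct and follows essentially the same route as the paper's proof: obtain single-sheet neighborhoods of $\widetilde{R}$ and $\widetilde{A}$ via Theorem~\ref{prop_W}, use compactness of $U$ to trap the two tails of $\gamma$ in $V_R$ and $V_A$, lift by starting in the distinguished sheet over $R$ and propagating by path lifting, and identify the sheet $g\widetilde{V}_A$ that absorbs the forward tail. You supply two things the paper leaves implicit and which are genuinely worth writing down: the verification that the $G$-translates of $W_A$ are pairwise disjoint and exhaust $p^{-1}(V_A)$ (so that the connectedness argument pinning $\widetilde{\gamma}$ to a single sheet is legitimate), and the uniqueness of both $\widetilde{\gamma}$ and $g$, which the paper's proof constructs but does not argue; neither addition changes the underlying method.
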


\begin{proof}

By Theorem \ref{prop_W}, there are open neighborhoods $\widetilde{V}_{R}$ and $\widetilde{V}_{A}$ of $\widetilde{R} $ and $\widetilde{A} $ such that $p|_{\widetilde{V}_{R}}$ and $p|_{\widetilde{V}_{A}}$ are homeomorphisms. Let $V_{R}:= p(\widetilde{V}_{R})$ and $V_{A}:=p(\widetilde{V}_{A})$. Since $\gamma:(-\infty,\infty)\to M$ is contained in a compact set, it follows from Lemma \ref{lem:claim1} that there exists $t^{\ast}$ such that $\gamma((-\infty,-t^{\ast}])\subset {V}_{R} $ and $\gamma([t^{\ast},\infty))\subset {V}_{A}$, see Fi gure \ref{fig:my_label}.

Denote by $a$ and $b$  the lifts of $\gamma(-t^{\ast})$ and  $\gamma(t^{\ast})$ which belong to  $\widetilde{V}_{R}$ and $\widetilde{V}_{A}$, respectively. Considering the  path $\gamma{|_{{[-t^{\ast},t^{\ast}]}}}$, by the unique path lifting property, there is a unique path $\widetilde{\beta}:[-t^{\ast},t^{\ast}]\rightarrow \widetilde{X}$ such that  $\widetilde\beta(-t^{\ast})=a$ and  $p\circ\widetilde\beta=\gamma$. Since $\gamma(t^{\ast}) \in V_{A}$, then $\widetilde{\beta}(t^{\ast}) \in p^{-1}(V_{A})$. Therefore, by the transitivity and freeness  of the action, there exists a unique $g \in G$ such that $\widetilde{\beta}(t^{\ast}) = gb  \in g\widetilde{V}_{A}$.

The juxtaposition $\widetilde{\gamma}$ of the paths $p^{-1}|_{\widetilde{V}_{R}}\circ \gamma|_{(-\infty, -t^{\ast}]}$, $  \widetilde{\beta} $ and  $  p^{-1}|_{g\widetilde{V}_{A}}\circ \gamma|_{ [t^{\ast},\infty)} $
is a lift of $\gamma$ such that $\alpha(\widetilde{\gamma})\subseteq \widetilde{R}$,   $\omega(\widetilde{\gamma})\subseteq g\widetilde{A}$, see Figure \ref{fig:my_label}. The uniqueness of $\widetilde{\gamma}$ follows from the uniqueness of each one of the these paths. 
\end{proof}

\begin{figure}[h!t]
    \centering
    \includegraphics[scale=0.7]{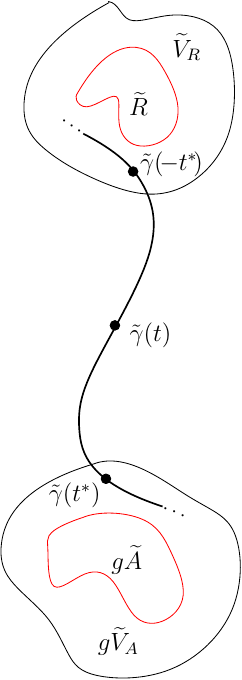}
    \caption{Construction of a lift  $\widetilde{\gamma}$ of the orbit $\gamma$.}
    \label{fig:my_label}
\end{figure}

\vspace{0.3cm}

The assumption that the orbit $\gamma$ is contained in a compact set  is necessary in the proof of Theorem \ref{teo:SVL}. Figure \ref{fig:Contra_ex_4} shows an orbit which is not contained in any compact set. Hence, one can not apply Theorem \ref{teo:SVL}.

\begin{figure}[h!t]
    \centering
   \includegraphics[scale=.7]{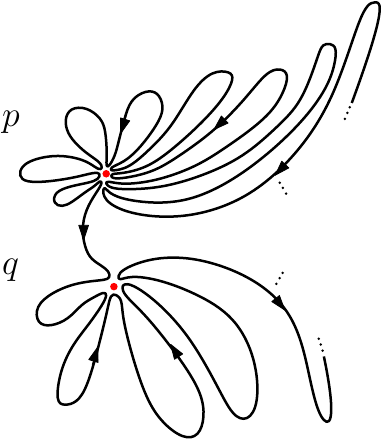} 
    \caption{Example of an orbit which is not contained in a compact set.}
    \label{fig:Contra_ex_4}
\end{figure}

\begin{definition} Let $(A,R)$ be a $p$-attractor-repeller pair for $S$ where $S$ is an invariant set. Fix  sheets $\widetilde{R}$ and $\widetilde{A}$ over $R$ and $A$, respectively.
\begin{enumerate}
    \item Given $g \in G$, an orbit $\gamma\subset C^*(R,A)$ is said to be  a $g$-orbit if there is a lift   $\widetilde{\gamma}$ of $\gamma$ such that  $\alpha(\widetilde{\gamma})\subseteq \widetilde{R }$ and  $\omega(\widetilde{\gamma})\subseteq g\widetilde{A}$. The union of all $g$-orbits between $R$ and $A$ is denoted by $C_g(R,A)$.

    \item For each $g\in G$, define
     $${S}_{R,gA}= R \cup C_g(R,A)  \cup A
 \qquad   \text{and} \qquad
\widetilde{S}_{{R},g{A}}:=\widetilde{R}\cup  C^{\ast}(\widetilde{R},g\widetilde{A})\cup g\widetilde{A}.$$

\end{enumerate}

\end{definition}

It is important to keep in mind that  
 $C_g(R,A)$, $S_{R,gA}$ and $\widetilde{S}_{R,gA}$ depend on
the choice of the sheets $\widetilde{R}$ and $\widetilde{A}$.
Note that,  whenever $g\neq h$, one has $ {S}_{{R},g{A}}\cap {S}_{{R},h{A}} =  R\cup A.$
 By definition, a $g$-orbit always has nonempty $\alpha$- and $\omega$-limit sets.

It is clear that an invariant set $S$ is not necessarily evenly covered, i.e. $p^{-1}(S)$ is not  necessarily a union of disjoint invariant sets homeomorphic to $S$, even though this property  holds, by assumption, for $p^{-1}(A)$ and $p^{-1}(R)$. However, the next theorem gives a sufficient condition for ${S}_{{R},gA}$ to be evenly covered.

\begin{theorem}\label{teo:Marimon}

Let $(A,R)$ be a $p$-attractor-repeller pair for an invariant set $S$. Fix sheets $\widetilde{R}$ and $\widetilde{A}$ over $R$ and $A$, respectively. Given $g\in G$,  if  $\widetilde{S}_{{R},g{A}}$ is compact,
 then 
 %${S}_{{R},g{A}}$ is evenly covered, i.e.,
 $p^{-1}({S}_{{R},gA})=\displaystyle\bigsqcup_{h\in G} h\widetilde{S}_{{R},g{A}}$ and, for all $h\in G$,
 $p|_{h \widetilde{S}_{{R},g{A}}}:h \widetilde{S}_{{R},g{A}}\to {S}_{{R},g{A}}$ is a homeomorphism.
\end{theorem}

In order to  prove Theorem \ref{teo:Marimon},  one establishes  some properties of $\widetilde{S}_{{R},g{A}}$ in the next lemma.

\begin{lemma}\label{prop:SVL1}Let $(A,R)$ be a $p$-attractor-repeller pair for an invariant set $S$. Fix sheets $\widetilde{R}$ and $\widetilde{A}$ over $R$ and $A$, respectively. Given $g\in G$, one has that:

\begin{enumerate}
    \item  if $\tilde{x} \in C^{\ast}(\widetilde{R},g\widetilde{A})$ then  $h\tilde{x} \in C^{\ast}(h\widetilde{R},hg\widetilde{A})$, for every $h\in G$;

\item $  h\widetilde{S}_{{R},g{A}}=\widetilde{S}_{h{R},hg{A}}$,  for all $h\in G$;
 
\item if $\tilde{x} \in C^{\ast}(\widetilde{R},g\widetilde{A})$ then   $h\tilde{x} \notin C^{\ast}(\widetilde{R},g\widetilde{A})$, for all $h\in G$ with $h\neq e$. 
 \end{enumerate}
\end{lemma}
\begin{proof}
Item (1) follows from the fact that each element $h\in G$ is a covering space equivalence and therefore it commutes with the dynamics. Items (2) and (3) follow directly from item (1).
\end{proof}

\vspace{0.3cm}

\begin{proof}[Proof of Theorem \ref{teo:Marimon}]

Since  $\widetilde{S}_{{R},g{A}}$  is compact, each $g$-orbit is  contained in a  compact set, namely ${S}_{{R},g{A}}$, hence every $g$-orbit is under the hypothesis of Theorem \ref{teo:SVL}.  
Moreover,  $h\widetilde{S}_{{R},g{A}}$  is compact for all $h\in G$ and $p|_{h\widetilde{S}_{{R},g{A}}}$ is onto, by Theorem \ref{teo:SVL}. Thus, in order to prove that $p|_{h\widetilde{S}_{{R},g{A}}}:h\widetilde{S}_{{R},g{A}}\to {S}_{{R},g{A}}$ is a homeomorphism, it is enough to prove that $p|_{\widetilde{S}_{{R},g{A}}}$ is injective. 
Let $\tilde{x}$ and $\tilde{y}$ be distinct points in $\widetilde{S}_{{R},g{A}}$. One has the following cases to consider.
\begin{enumerate}
    \item If $\tilde{x} $ or $\tilde{y}$ belong to $\widetilde{R}$ or $\widetilde{A}$, then it is straightforward that $p(\tilde{x})\neq p(\tilde{y})$.
    
    \item If  $\tilde{x}$ and  $\tilde{y}$ do not belong to $\widetilde{R}$ nor $\widetilde{A}$,  then there are two possibilities:
 \begin{enumerate}

     \item  $\tilde{x} $ and  $\tilde{y}$ belong to the same connecting orbit  in $C^{\ast}(\widetilde{R},g\widetilde{A}) $, which is an aperiodic orbit. Then there is a one-to-one correspondence between this orbit and its projection via $p$, by Theorem \ref{teo:SVL}. Therefore, $p(\tilde{x})\neq p(\tilde{y})$.
     
     \item  $\tilde{x} $ and  $\tilde{y}$ belong to different connecting orbits  in $C^{\ast}(\widetilde{R},g\widetilde{A}) $. Suppose that $p(\tilde{x})= p(\tilde{y})$. Thus  there exists ${h'}\in G$ such that $\tilde{x} = {h'}\tilde{y}$.
     By  Lemma \ref{prop:SVL1}, ${h'}=e$. Hence $\tilde{x}$ and $\tilde{y}$ belong to the same orbit, which is a contradiction.   

 \end{enumerate}   
 
In all cases one verifies that $p(\tilde{x})\neq p(\tilde{y})$, therefore $p|_{h \widetilde{S}_{{R},g{A}}}$ is injective.

\end{enumerate}
By Lemma \ref{prop:SVL1}, it follows that $p^{-1}({S}_{{R},g{A}})=\displaystyle\bigsqcup_{h\in G} h \widetilde{S}_{{R},g{A}}$.   
\end{proof}

The next proposition is a direct consequence of Theorem \ref{teo:Marimon} and Theorem \ref{prop_W}.

\begin{proposition}\label{cor:IISIFF} 
Let $(A,R)$ be a $p$-attractor-repeller pair for an invariant set $S$. Fix sheets $\widetilde{R}$ and $\widetilde{A}$ over $R$ and $A$, respectively. Given $g\in G$,  if  $\widetilde{S}_{{R},g{A}}$ is compact, then ${S}_{{R},g{A}} $ is an isolated invariant set if and only if $\widetilde{S}_{{R},g{A}} $ is an isolated invariant set. In this case, $CH_{\ast}(\widetilde{S}_{R,gA}) = CH_{\ast}(S_{R,gA})$.
\end{proposition}

In general, $p^{-1}(S)$ is not an isolated invariant set even when $S$ is an isolated invariant set. However,  whenever $S$ is an isolated invariant set and  $\widetilde{S}_{{R},g{A}}$ is compact for all $g\in G$, the next result guarantee  that   $p^{-1}(S)$ can be decomposed into a union of isolated invariant sets.

\begin{theorem}\label{teo:G=G'} Let $S $ be an isolated invariant set and  $(A,R)$ a $p$-attractor-repeller pair of $S$.
Fix sheets $\widetilde{R}$ and $\widetilde{A}$ over $R$ and $A$, respectively. Given $g\in G$,  if  $\widetilde{S}_{{R},g{A}}$ is compact,
then ${S}_{{R},g{A}} $ is an isolated invariant set.
\end{theorem}
\begin{proof}

Since $\widetilde{S}_{{R},g{A}}$ is compact,
then ${S}_{{R},g{A}} $ is compact. Clearly ${S}_{{R},g{A}} $ is an invariant set. Hence, one needs to prove that
$S_{R,gA}$ is isolated.

By Theorem \ref{teo:Marimon},
 $p|_{ \widetilde{S}_{{R},g{A}}}: \widetilde{S}_{{R},g{A}}\to {S}_{{R},g{A}}$ is a homeomorphism, which can be extended to   a homeomorphism   $p|_{\widetilde{V}}: \widetilde{V}\to V$   where $\widetilde{V}$ is a compact neighborhood  of $ \widetilde{S}_{{R},g{A}}$, by  Theorem \ref{prop_W}.

Given $h \in G$, let $\widetilde{U}_{h}$ be an open neighborhood of $h\widetilde{A} $ such that $\omega(\widetilde{U}_h)= h\widetilde{A}$.
Now, consider  the set $$\widetilde{W} = \widetilde{V}\setminus \bigcup_{h \in G\backslash\{g\}}\widetilde{U}_h   $$
which is still a compact neighborhood of 
$ \widetilde{S}_{{R},g{A}}$, since no $g$-orbit intersects the sets $\widetilde{U}_h$, for $h\neq g$. 

Let $N$ be an isolating neighborhood of $S$. Then $N\cap p(\widetilde{W})$ is a compact neighborhood of ${S}_{R,gA}$  and ${S}_{R,gA}$ is the maximal invariant set in $N\cap p(\widetilde{W})$.
Therefore, ${S}_{R,gA}$ is an isolated invariant set.
\end{proof}

In general, one has $\displaystyle\bigcup_{g\in G}{S}_{{R},g{A}}\subsetneq S$.   However, if $S$ is a compact set (or an isolated invariant set), by Theorem \ref{teo:SVL} the equality holds and $p^{-1}(S)=\displaystyle\bigcup_{g,h \in G} h\widetilde{S}_{{R},g{A}} $.  Moreover, if  $\widetilde{S}_{{R},g{A}}$ is compact,
 Theorem \ref{teo:G=G'} and Proposition \ref{cor:IISIFF}  guarantee that $\widetilde{S}_{{R},g{A}}$ is  an isolated invariant set and $p^{-1}(S)$ is a disjoint union of isolated invariant sets.

In the remainder of this subsection,  one establishes some results for the case that $\displaystyle S=\bigcup_{g\in G}{S}_{{R},g{A}}$.

\begin{lemma}\label{lemma:SclosedSgclosed}Let $S$ be an invariant set which  admits a $p$-attractor-repeller pair  $(A,R)$ such that  $S=\displaystyle\bigcup_{g\in G}{S}_{{R},g{A}} $. 

 Given $g\in G$,  if $S$ is a closed set then $\widetilde{S}_{{R},g{A}}$ is also closed.
\end{lemma}
\begin{proof}
Suppose that $\widetilde{S}_{R,gA}$ is not closed. Let $y \in cl(\widetilde{S}_{R,gA})\setminus  \widetilde{S}_{R,gA}$.  Since $S$ is closed,  $p^{-1}(S)$ is closed, hence $y \in p^{-1}(S)$.  Clearly  $y \notin \widetilde{R}$, $y \notin  p^{-1}(A) $. 
By hypothesis, there exists $h \in G$ such that $y \in \widetilde{S}_{R,hA}$. Of course $h \neq g$ and there is  $t' \in \mathbb{R}$ such that $\widetilde{\varphi}(t',y)\in \widetilde{U}_{h}$, where $\widetilde{U}_{h}$ is an open neighborhood of $h\widetilde{A} $ such that $\omega(\widetilde{U}_h)= h\widetilde{A}$. Let $\widetilde{B}\subset \widetilde{U}_{h}$ be a neighborhood of $\widetilde{\varphi}(t',y)$ and $T=\widetilde{\varphi}([-t',0],\widetilde{B})$.
Let $(x_n)$ be a sequence in $\widetilde{S}_{R,gA}\setminus (\widetilde{R} \cup g\widetilde{A})$ converging to $y$. 
For $n$ sufficiently large, $x_n \in T$, hence $\omega(x_n) \subset h\widetilde{A}$. On the other hand, $x_n \in \widetilde{S}_{R,gA}\setminus (\widetilde{R} \cup g\widetilde{A})$, which means that $\omega(x_n) \subset g\widetilde{A}$. That is  a contradiction since $g \neq h$.
\end{proof}

It follows from  Lemma \ref{lemma:SclosedSgclosed} that Theorem \ref{teo:G=G'} holds
assuming a weaker hypothesis:
$\widetilde{S}_{R,gA}$ is contained in a compact set of $\widetilde{X}$. In particular, it holds when $\bigcup_{g\in G}\widetilde{S}_{{R},g{A}}$ is compact.

Considering  sheets $\widetilde{R}$ over $R$ and $\widetilde{A}$ over $A$, define $$\widetilde{C}_R^S:=\displaystyle\bigcup_{g\in G}\widetilde{S}_{{R},g{A}}.$$ Note that $p(\widetilde{C}_R^S)=S$ and $p$ restricted to $\widetilde{C}_R^S\setminus p^{-1}(A)$ is a homeomorphism.

The next proposition gives a condition in order to guarantee that the set of all $g\in G$ such that $C_g(R,A) \neq \emptyset$ is finite, i.e. $\widetilde{C}_R^S$ can be written as a finite union of sets  $\widetilde{S}_{{R},g{A}}$.

\begin{proposition}\label{prop:fin}
Let $S$ be an invariant set which  admits a $p$-attractor-repeller pair  $(A,R)$ such that  $S=\displaystyle\bigcup_{g\in G}{S}_{{R},g{A}} $. 
Then $\widetilde{C}_R^S$ is compact if and only if  $\widetilde{S}_{{R},g{A}}$ is a compact invariant set, for all $g\in G$ and there exists a finite subset $\Upsilon $ of $G$ such that $\widetilde{C}_R^S=\displaystyle\bigcup_{g\in \Upsilon}\widetilde{S}_{{R},g{A}}$.
\end{proposition}
\begin{proof} 
If there exists $\Upsilon \subseteq G$  finite then $\widetilde{C}_R^S$ is a finite union of compact sets, hence $\widetilde{C}_R^S$ is compact.
On the other hand, assume that $\widetilde{C}_R^S$ is  a compact set. It follows that $S$ is a compact set and, by Lemma \ref{lemma:SclosedSgclosed},   $\widetilde{S}_{{R},g{A}}$ is a compact invariant set, for all $g\in G$.  Now, suppose that it does not exist a finite  set  $\Upsilon \subseteq G$ such that $\widetilde{C}_R^S=\displaystyle\bigcup_{g\in \Upsilon}\widetilde{S}_{{R},g{A}}$. Let $(x_n)$ be a sequence of points such that $x_n\in  g_n \widetilde{A}$. Since $A$ is evenly covered, the sequence $x_n$ does not have any accumulation point, which contradicts the fact that $\widetilde{C}_R^S$  is compact. 
\end{proof}

\begin{corollary}
 
Let $S$ be an invariant set which  admits a $p$-attractor-repeller pair  $(A,R)$ such that  $S=\displaystyle\bigcup_{g\in G}{S}_{{R},g{A}} $. 

If $\widetilde{C}_R^S$ is compact then there exists a finite subset $\Upsilon $ of $G$ such that ${S}=\displaystyle\bigcup_{g\in \Upsilon}{S}_{{R},g{A}}$.
\end{corollary}

In the next subsection, one introduces $p$-connection matrices for invariant sets. Note that even when $S$ is not an isolated invariant set but $\widetilde{S}_{{R},g{A}} $ is an isolated invariant set for all $g\in G$, then $p^{-1}(S)$ can still be decomposed into a union of isolated invariant sets. One defines a connecting map for this general setting.
Proposition \ref{cor:IISIFF} and Theorem \ref{teo:G=G'} guarantee that whenever $S$ is an isolated invariant set and $\widetilde{S}_{{R},g{A}}$ is compact then $\widetilde{S}_{{R},g{A}}$ is an isolated invariant set.
Hence, for this particular case, the connecting map for a p-attractor-repeller pair is well defined, as proved in the next section.

\begin{remark}\textcolor[rgb]{1,1,1}{.}
 In \cite{MR936812}, McCord decomposed  the  set of connections $C(R,A)$ for an isolated invariant set in a topological manner. Herein we decompose it by taking into account the covering action. Moreover, we do not require that $S$ is an isolated set.
\end{remark}

\subsection{$p-$Connection Matrices for $p$-Attractor-Repeller Decompositions}\label{subsec:4.2}

In this subsection, we define a boundary map that ``counts'' the flow lines between a repeller $R$ and an attractor $A$ by means of the lifts of these connections via the covering map $p$. In order to accomplish that, one needs to have at hand an algebraic structure which makes it possible to  ``count'' these flow lines in a suitable way.  In what follows, we define this structure, denoted by $\mathbb{Z}((G))$, where $G$ is the deck transformation group associated to $p$.
\begin{enumerate}
\item[(H-1)]  If $G$ is a finite group, we consider  $\mathbb{Z}((G))$ as the group ring $\mathbb{Z}[G]$.

\item[(H-2)] Assume that $G$  is  a totally ordered group, i.e. $G$ is equipped with a total ordering $\leq$ that is compatible with the multiplication of $G$, (for all $x,y,z\in G$, $x \leq y$ implies that $zx\leq zy$ and 
$xz \leq yz$). Moreover, assume that the set 
$\{g\in G \mid C_{g}(R,A)\neq\emptyset\}$ is well-ordered with respect to the order $\leq$.
 For every formal series $\eta \in \mathbb{Z}[G]$ $$\eta=\sum_{g\in G}a_gg \ , $$
where $a_g\in \mathbb{Z}$, the support of $\eta$ is defined as
$$supp(\eta) = \{ g\in G \mid a_g\neq 0 \}.$$
Let $\mathbb{Z}((G))$ be the ring of the formal series on $G$ that have a well-ordered support. For more details see \cite{MR3133710}.

\item[(H-3)] For more general $G$, we will assume that $\#\{g\in G;\,\, C_g(R,A)\neq\emptyset\}<\infty$ and $\mathbb{Z}((G))=\mathbb{Z}[G]$. 
        
    \end{enumerate}

An important particular case of (H-2) is when $G$ is an infinite cyclic group, namely $G=<g>$. In this case,   $\mathbb{Z}((G))$ is the Novikov ring $ \mathbb{Z}((t))$, as defined in Subsection \ref{subsec:Nov}.

Note that any totally ordered group is torsion-free. The converse holds for abelian groups, i.e.,  an abelian group admits a total ordering if and only if it is torsion-free. 

The conditions on the flow  in (H-2) and (H-3) are imposed to guarantee that there are no bi-infinite connections and this fact is necessary in order to have a  well defined boundary map.

Let $S$ be an invariant set and $(A,R)$ a $p$-attractor-repeller pair  of $S$. Fix sheets $\widetilde{R}$ and $\widetilde{A}$ over $R$ and $A$, respectively.
Consider the subset $G'$ of $G$ of all elements $g\in G$ such that  $\widetilde{S}_{{R},g{A}}$ is an  isolated invariant set.
By Proposition \ref{cor:IISIFF}, one has that  $S_{{R},g{A}} = {R}\cup  C_g({R},{A})\cup {A}$ is also an isolated invariant set.

Clearly,  $(g\widetilde{A},\widetilde{R})$ is an attractor-repeller pair for $\widetilde{S}_{{R},g{A}}$, 
for each $g\in G'$, thus
the homology Conley exact sequence of the pair $(g\widetilde{A},\widetilde{R})$ is
\begin{equation}\label{eq:exactpairsequence}
\dots \stackrel{}{\longrightarrow} CH_{\ast}(g\widetilde{A}) \stackrel{i_{\ast}}{\longrightarrow} CH_{\ast}(\widetilde{S}_{{R},g{A}})  \stackrel{p_{\ast}}{\longrightarrow} CH_{\ast}(\widetilde{R})  \xrightarrow{\widetilde{\delta}_{\ast}(\widetilde{R}, g \widetilde{A})} CH_{\ast-1}(g\widetilde{A})  {\longrightarrow} \cdots. 
\end{equation} 
One can build up  an analogous exact sequence for any pair $(g \widetilde{A},h\widetilde{R})$ whenever $h^{-1}g \in G'$.
By the equivariance, one has  that  $\widetilde{S}_{g{R},g{A}}\cong\widetilde{S}_{{R},{A}}$  and $\widetilde{S}_{h{R},g{A}}\cong \widetilde{S}_{{R},h^{-1}g{A}}$, hence $\widetilde{\delta}_{\ast}(\widetilde{R},  \widetilde{A}) = \widetilde{\delta}_{\ast}(g \widetilde{R}, g \widetilde{A})$
 and $\widetilde{\delta}_{\ast}(h\widetilde{R}, g \widetilde{A}) = \widetilde{\delta}_{\ast}( \widetilde{R}, h^{\!-\!1}g \widetilde{A})$.

Fix the sets  $B_k(R)=\{r_{\alpha}^k,\ \alpha\in \Lambda_k\}$ and $B_k(A)=\{a_{\alpha}^k,\ \alpha\in \Gamma_k\}$ of generators for $CH_k({R})$ and  $CH_k({A})$, respectively. 
Using the isomorphisms $CH_{\ast}(h\widetilde{R})= CH_{\ast}(R)$ (resp., $CH_{\ast}(h\widetilde{A})= CH_{\ast}(A)$), as in Proposition \ref{prop:Ewertonmon}, one can  define 
\begin{eqnarray}\label{defbordoAR}
 {\delta}^N_k(R,A)   :  \mathbb{Z}((G)) \otimes_{\mathbb{Z}[G]}  \mathbb{Z}[G][B_k(R)]  & \longrightarrow &   \mathbb{Z}((G))   \otimes_{\mathbb{Z}[G]}   \mathbb{Z}[G][B_{k-1}(A)]  \nonumber\\
 h \otimes  r_\alpha^k& \longmapsto   &   \sum_{g\in G} h^{\!-\!1}g  \otimes \widetilde{\delta}_k(h\widetilde{R}, g \widetilde{A})(r_\alpha^k)  \\
hr_{\alpha}^k & \longmapsto &   \sum_{g\in G}h^{\!-\!1}g \ \widetilde{\delta}_k(h\widetilde{R}, g \widetilde{A})(r_{\alpha}^k)   \nonumber 
\end{eqnarray}
where $\mathbb{Z}[G][B_k(R)] $ is a free $\mathbb{Z}[G]$-module generated by $B_k(R)$ and $\widetilde{\delta}(h\widetilde{R}, g \widetilde{A})(r^k_\alpha)$ is the null map if $h^{-1}g \notin G'$. Note that there is an injective homomorphism from $CH_k(R)$ to $\mathbb{Z}[G][B_k(R)]$ given by the map $$\begin{array}{rcl}
\xi: CH_k(R)& \longrightarrow & \mathbb{Z}[G][B_k(R)]\\ tr_\alpha^k & \longmapsto & ter_\alpha^k\end{array}$$
where $t\in \mathbb{Z}$, $e\in G$ is the identity element and $r^k_\alpha\in B_k(R)$.

Denoting $NCH_{k}(A) =  \mathbb{Z}((G)) \otimes_{\mathbb{Z}[G]} \mathbb{Z}[G][B_k(A)] $ and $ NCH_{k}(R) =  \mathbb{Z}((G))  \otimes_{\mathbb{Z}[G]} \mathbb{Z}[G][B_k(R)] $,
the map $$N\Delta: NCH_{\ast}(A) \bigoplus NCH_{\ast}(R) \longrightarrow  NCH_{\ast}(A) \bigoplus NCH_{\ast}(R)$$ defined by the matrix 
$$
\left(
\begin{array}{cc}
0 & \delta^N (R,A) \\
0 & 0 \\
\end{array}
\right)
$$
is an upper triangular boundary map and it is called a {\it $p$-connection matrix for the $p$-attractor-repeller decomposition of $S$}. Denoting $NC(S) = NCH_{\ast}(A) \bigoplus NCH_{\ast}(R)$, one has that   $(NC(S),N\Delta)$ is a chain complex.

Whenever $S$ is an isolated invariant set and $\widetilde{S}_{R,gA}$ is compact for all $g\in G$, by Theorem \ref{teo:G=G'} and Proposition \ref{cor:IISIFF}, one has that $G=G'$. Hence, the exact sequence in (\ref{eq:exactpairsequence}) is well defined  for all $g\in G$. Therefore $\delta^N$ keeps track of all information on connections between adjacent invariant sets.

The next result shows that the entry $\delta^N(R,A)$ of a $p$-connection matrix  gives dynamical information about the connecting orbits from the repeller $R$ to the attractor $A$.

\begin{proposition}  If  ${\delta}^N(R, A)$ is non-zero, then $C^{\ast}({R},{A}) \neq \emptyset$. 
\end{proposition}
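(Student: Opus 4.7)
The plan is to exploit the definition of $\delta^N(R,A)$ to reduce to the classical Conley-theoretic statement that a non-vanishing attractor-repeller connection homomorphism forces the existence of connecting orbits, and then project those lifted orbits down to $X$ via $p$.

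First, I would unpack the assumption $\delta^N(R,A) \neq 0$. By its definition, there exist an integer $k$, an element $h \in G$, and a generator $r_\alpha^k \in B_k(R)$ such that
\[
\delta^N_k(R,A)(h \otimes r_\alpha^k) = \sum_{g \in G} h^{-1}g \otimes \widetilde{\delta}_k(h\widetilde{R}, g\widetilde{A})(r_\alpha^k) \neq 0.
\]
Since the sum is non-zero, at least one summand is non-zero, so there exists $g \in G$ with $\widetilde{\delta}_k(h\widetilde{R}, g\widetilde{A})(r_\alpha^k) \neq 0$. By convention $\widetilde{\delta}_k(h\widetilde{R}, g\widetilde{A})$ was declared zero when $g \notin G'$, so necessarily $g \in G'$; by Corollary \ref{cor:IISIFF} together with the $G$-equivariance of the flow $\widetilde{\varphi}$, this means $\widetilde{S}_{hR,gA}$ is an isolated invariant set.

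Next, I would appeal to the classical Conley attractor-repeller theory applied to the isolated invariant set $\widetilde{S}_{hR,gA}$: the map $\widetilde{\delta}_k(h\widetilde{R}, g\widetilde{A})$ is exactly the connection homomorphism in the long exact homology sequence of the attractor-repeller pair $(g\widetilde{A}, h\widetilde{R})$, and if this map is non-zero then the set of connecting orbits is non-empty, i.e., $C^{\ast}(h\widetilde{R}, g\widetilde{A}) \neq \emptyset$ (see \cite{MR511133, MR797044}). Pick any $\widetilde{\gamma} \in C^{\ast}(h\widetilde{R}, g\widetilde{A})$.

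Finally, I would project the orbit and verify the four defining conditions of $C^{\ast}(R,A)$. The image $p(\widetilde{\gamma})$ is an orbit of $\varphi$ contained in $S$; by continuity of $p$, its $\alpha$-limit equals $p(\alpha(\widetilde{\gamma})) \subseteq p(h\widetilde{R}) = R$ and its $\omega$-limit equals $p(\omega(\widetilde{\gamma})) \subseteq p(g\widetilde{A}) = A$, both non-empty. It remains to exclude $p(\widetilde{\gamma}) \in A \cup R$: if it lay in $R$, invariance of $p^{-1}(R) = \bigsqcup_{g'} g'\widetilde{R}$ and of each sheet would force the whole orbit of $\widetilde{\gamma}$ into some $g'\widetilde{R}$, contradicting either $\alpha(\widetilde{\gamma}) \subseteq h\widetilde{R}$ and disjointness of the sheets, or the fact that $\widetilde{\gamma} \notin h\widetilde{R}$; the argument for $A$ is analogous. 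Therefore $p(\widetilde{\gamma}) \in C^{\ast}(R,A)$, which concludes the proof.

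The main obstacle I anticipate is the last bookkeeping step: making sure the projected orbit does not accidentally fall into $R \cup A$ and that the inclusions of $\alpha$- and $\omega$-limits pass correctly through $p$. This is precisely where the disjoint-sheet structure of $p^{-1}(R)$ and $p^{-1}(A)$, together with Theorem \ref{teo:SVL} and the free transitive action of $G$, must be invoked.
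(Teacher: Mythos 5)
Your proof is correct and is essentially the paper's argument run in the contrapositive direction: both reduce the statement to the classical fact that a nonzero connection homomorphism $\widetilde{\delta}_k(h\widetilde{R},g\widetilde{A})$ in the attractor--repeller sequence forces connecting orbits upstairs, combined with the correspondence (via $p$) between connections in $\widetilde{S}_{hR,gA}$ and in $S$. The paper instead assumes $C^{\ast}(R,A)=\emptyset$, concludes the Conley index of each $\widetilde{S}_{h\widetilde{R},g\widetilde{A}}$ splits as a direct sum so every $\widetilde{\delta}$ vanishes; your extra bookkeeping about projecting the lifted orbit and excluding $A\cup R$ is sound but replaces a step the paper gets for free.
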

\begin{proof}
 Suppose that $C^{\ast}({R},{A})=\emptyset$. Then $C(h{\widetilde{R}},g\widetilde{A})=\emptyset$, for each $g,h\in G^{\prime}$, and hence  $CH(\widetilde{S}_{h R,g A})= CH_{\ast}(g\widetilde{A}) \oplus CH_{\ast}(h\widetilde{R})$. It follows from the exactness of the long exact sequence in (\ref{eq:exactpairsequence}) that $\widetilde{\delta}_{\ast}(h\widetilde{R}, g \widetilde{A})=0$ for each $g,h\in G^{\prime}$. Therefore,  $\delta^N(R,A)=0$.  
\end{proof}

\begin{theorem}[Invariance of the $p$-connection matrices]
The chain complex $(NC(S),N\Delta)$ is invariant under equivalent regular covering spaces.
\end{theorem}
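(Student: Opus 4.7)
The plan is to produce a chain isomorphism $\Phi:(NC(S)^{(2)},N\Delta^{(2)})\to(NC(S)^{(1)},N\Delta^{(1)})$ induced by the equivalence $h:\widetilde{X}_2\to\widetilde{X}_1$, and then verify it is a chain map.

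First, I would recall the standard fact that any equivalence of regular covering spaces intertwines the deck transformation actions via a group isomorphism $\psi:G_2\to G_1$ characterized by $h(g\cdot\tilde{x})=\psi(g)\cdot h(\tilde{x})$ for every $g\in G_2$ and $\tilde{x}\in\widetilde{X}_2$. This extends to a ring isomorphism $\Psi:\mathbb{Z}((G_2))\to\mathbb{Z}((G_1))$; in cases (H-1) and (H-3) this is just the $\mathbb{Z}$-linear extension of $\psi$ to the group rings, while in case (H-2) one must additionally verify that the well-ordered support condition is preserved, which I would handle by transferring the total order on $G_1$ through $\psi^{-1}$ (compatibility with the flow-induced ordering is inherited from the conjugacy of the pullback flows established in the proof of Proposition~\ref{prop:equivariant}).

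Second, I would fix lifts $\widetilde{A}_1,\widetilde{R}_1\subset\widetilde{X}_1$ and set $\widetilde{A}_2:=h^{-1}(\widetilde{A}_1)$, $\widetilde{R}_2:=h^{-1}(\widetilde{R}_1)$. By Corollary~\ref{cor:equivariant}(2), these serve as compatible attractor/repeller lifts for the second covering, and the equivariance of $h$ yields $h(g_2\widetilde{A}_2)=\psi(g_2)\widetilde{A}_1$ together with a bijection
$$h\bigl(\widetilde{S}^{(2)}_{\widetilde{R}_2,g_2\widetilde{A}_2}\bigr)=\widetilde{S}^{(1)}_{\widetilde{R}_1,\psi(g_2)\widetilde{A}_1},$$
so that $g_2$ belongs to the analog of $G'$ for $p_2$ iff $\psi(g_2)$ belongs to $G'$ for $p_1$. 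By Corollary~\ref{cor:equivariant}(4) the homology Conley indices of $\widetilde{A}_i,\widetilde{R}_i$ agree, so a single choice of generators $B_k(A),B_k(R)$ for $CH_k(A),CH_k(R)$ works on both sides. I then define $\Phi:=\Psi\otimes\mathrm{id}$ on each of $NCH_k(A)$ and $NCH_k(R)$; it is clearly a $\mathbb{Z}((G_2))$-$\mathbb{Z}((G_1))$ semilinear isomorphism of modules.

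Third, I would show the two boundary maps correspond under $\Phi$. The topological conjugacy $\widetilde{\varphi}_1=h\circ\widetilde{\varphi}_2\circ(\mathrm{id}\times h^{-1})$ from Proposition~\ref{prop:equivariant} induces an isomorphism of the attractor-repeller long exact sequences~(\ref{eq:exactpairsequence}), hence an equality
$$\widetilde{\delta}_k^{(2)}\bigl(\widetilde{R}_2,g_2\widetilde{A}_2\bigr)=\widetilde{\delta}_k^{(1)}\bigl(\widetilde{R}_1,\psi(g_2)\widetilde{A}_1\bigr)$$
under the Conley index identification. Summing over $g_2\in G_2$ and changing variables to $g_1=\psi(g_2)$ yields $\Phi\circ\delta^{N,(2)}(R,A)=\delta^{N,(1)}(R,A)\circ\Phi$, which is exactly the chain-map condition for $N\Delta$.

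The main obstacle I anticipate is the bookkeeping in case (H-2): ensuring that $\Psi$ maps formal series with well-ordered support to formal series with well-ordered support, and that the finiteness of $\{g\in G\mid C_g(R,A)\neq\emptyset\}$ (in case (H-3)) or the well-ordering of this set is intrinsic to $S$ and not to the particular covering used to define it. Once $\psi$ is shown to identify these sets bijectively (which follows from $h$ mapping $g_2$-orbits bijectively onto $\psi(g_2)$-orbits via Theorem~\ref{teo:SVL}), the remaining verification is essentially formal.
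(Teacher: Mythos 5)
Your proposal follows essentially the same route as the paper: both construct the chain isomorphism by transporting the deck-group isomorphism $\psi\colon G_2\to G_1$ induced by $h$ to a ring isomorphism of $\mathbb{Z}((G))$-modules, then use the topological conjugacy $\widetilde{\varphi}_1 = h\circ\widetilde{\varphi}_2\circ(\mathrm{id}\times h^{-1})$ to identify the attractor-repeller connecting maps $\widetilde{\delta}$ term by term and conclude commutativity with $\delta^N$. Your extra attention to the well-ordered-support condition in case (H-2) is a worthwhile refinement that the paper's proof leaves implicit, but it does not change the argument's structure.
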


\begin{proof}
Let $p_i:\widetilde{X}_{i} \rightarrow X$ be equivalent regular covering spaces of $X$ and $G_i$ be the deck transformation groups of $p_i$, for $i=1,2$.  Given $S \subset X$ an invariant set, a pair  $(A,R)$ is a $p_1$-attractor-repeller pair of $S$ iff  it is a $p_2$-attractor-repeller pair of $S$.  

Let $h:\widetilde{X}_2\rightarrow \widetilde{X}_1$ be a  homeomorphism which provides an equivalence of the covering spaces and
fix sheets $\widetilde{R},\widetilde{A}\subset \widetilde{X}_{1}$ over $R$ and $A$ with respect to $p_1$.
 Given $g\in G_1$, 
it follows from Proposition \ref{prop:equivariant} that  $h^{-1}(\widetilde{S}_{R,gA})= h^{-1}(\widetilde{R})\cup  C(h^{-1}(\widetilde{R}),h^{-1}(\widetilde{A}))\cup h^{-1}(\widetilde{A})$  is an isolated invariant set in $\widetilde{X}_2$ if and only if $\widetilde{S}_{R,gA}$ is an isolated invariant set in $\widetilde{X}_1$.

Let $\mathfrak{h}$ be the isomorphism between   $G_1$ and $G_2$ induced by $h$. One has that $\mathfrak{h}$ induces an isomorphism 
$$H: \mathbb{Z}((G_2)) \otimes_{\mathbb{Z}[G_2]} \bigg( \mathbb{Z}[G_2][B_k(A)] \oplus \mathbb{Z}[G_2][B_k(R)]\bigg) \longrightarrow
\mathbb{Z}((G_1)) \otimes_{\mathbb{Z}[G_1]}\bigg( \mathbb{Z}[G_1][B_k(A)]\oplus \mathbb{Z}[G_1][B_k(R)]\bigg).
$$
  Note that, $H$ commutes with the boundary map, $H\circ \delta^N_2=\delta^N_1 \circ H$, since $h_\ast \circ \widetilde{\delta}_{2\ast} =\widetilde{\delta}_{1\ast}\circ h_\ast$, where $\widetilde{\delta}_{i\ast}$ is the connection map in the long exact sequence in  (\ref{eq:exactpairsequence}). Then the chain complexes that arise from the covering spaces $(\widetilde{X}_1,p_1)$ and $(\widetilde{X}_2,p_2)$ are isomorphic. 
\end{proof}

In the case that $S$  is an isolated invariant set and $p$ is the trivial covering map,  $p$-connection matrices coincide with the classical connection matrices defined by Franzosa in \cite{MR978368}. In this sense, the $p$-connection matrix theory, developed herein, generalizes the classical connection matrix theory.
More specifically,  when one considers the trivial covering action or one projects the group $G$ to the trivial group ($g\mapsto e$), the p-connection matrix is the usual connection matrix as one proves in the next result.

\begin{theorem}\label{thm_proj}
Let $S$ be an isolated invariant set and $(A,R)$ a p-attractor-repeller pair of $S$ associated to a covering map $p$.
If $\widetilde{C}_R^S$ is compact,
then the following diagram commutes 
$$\xymatrix{
NCH(A)\oplus NCH_\ast (R) \ar[r]^{N\Delta} \ar[d]^\Pi& NCH(A)\oplus NCH (R) \ar[d]^\Pi\\
CH(A)\oplus CH(R) \ar[r]^{\Delta}& CH(A)\oplus CH(R)
}
$$
where $\Pi$ is the following projection induced by the covering map $p$:
$$\begin{array}{rcl}
 \Pi:NCH_\ast(A)\oplus NCH_\ast (R)    &\longrightarrow & CH_\ast(A)\oplus CH_\ast (R) \\
    g\otimes r & \longmapsto & r
\end{array}
$$
\end{theorem}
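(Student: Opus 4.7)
The plan is to compute both sides of the diagram on a generator. Since both $N\Delta$ and $\Delta$ are upper triangular, the two compositions vanish on $NCH_\ast(A)$, so it suffices to check equality on a generator $h\otimes r \in NCH_k(R)$. On one side, $\Delta\circ\Pi(h\otimes r)=\delta(R,A)(r)$, where $\delta(R,A)$ is the classical connection homomorphism of the attractor-repeller pair $(A,R)$ in $S$. On the other side, the defining formula for $N\Delta$ followed by $\Pi$ yields
$$
\Pi\circ N\Delta(h\otimes r)\;=\;\sum_{g\in G}\widetilde{\delta}_k(h\widetilde{R},g\widetilde{A})(r),
$$
under the identifications $CH_\ast(g\widetilde{A})\cong CH_\ast(A)$ from Proposition \ref{prop:Ewertonmon}. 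Thus commutativity of the square reduces to the additive identity $\delta(R,A)=\sum_{g}\widetilde{\delta}(h\widetilde{R},g\widetilde{A})$.

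Next, by the equivariance noted just after the long exact sequence (\ref{eq:exactpairsequence}), $\widetilde{\delta}(h\widetilde{R},g\widetilde{A})=\widetilde{\delta}(\widetilde{R},h^{-1}g\widetilde{A})$, so substituting $g'=h^{-1}g$ eliminates the dependence on $h$ and rewrites the right-hand side as $\sum_{g'\in G}\widetilde{\delta}(\widetilde{R},g'\widetilde{A})(r)$. Because $S$ is isolated, Theorem \ref{teo:G=G'} ensures each $S_{R,gA}$ is itself an isolated invariant set, and Proposition \ref{prop:fin} ensures the sum is finite. Since $p|_{\widetilde{S}_{R,gA}}$ is a topological flow conjugacy by Theorem \ref{prop_W} together with Proposition \ref{prop:Dahisymon}, naturality of the attractor-repeller long exact sequence identifies $\widetilde{\delta}(\widetilde{R},g\widetilde{A})$ with the classical connection map $\delta_g\colon CH_\ast(R)\to CH_{\ast-1}(A)$ of the isolated piece $S_{R,gA}$. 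What remains is the additive formula $\delta(R,A)=\sum_g\delta_g$, which is exactly the conclusion of McCord's topological separation theorem \cite{MR936812} applied to the decomposition $C(R,A)=\bigsqcup_g C_g(R,A)$.

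The main obstacle is therefore to verify the hypothesis of McCord's theorem, namely that $C(R,A)=\bigsqcup_{g}C_g(R,A)$ is a genuine topological separation, i.e.\ isolating neighborhoods of the various pieces $S_{R,gA}$ can be chosen inside a common isolating neighborhood of $S$ in such a way that distinct pieces are separated away from $R\cup A$. The covering structure is precisely what supplies this: the lifts $g\widetilde{A}$ for distinct $g$ are disjoint in $\widetilde{X}$, so one may select small pairwise disjoint neighborhoods $\widetilde{U}_g$ of them and, arguing exactly as in the proof of Theorem \ref{teo:G=G'} (where the set $\widetilde{W}=\widetilde{V}\setminus\bigcup_{h\neq g}\widetilde{U}_h$ is constructed and then transported down via Theorem \ref{prop_W}), obtain mutually disjoint downstairs neighborhoods of the pieces $S_{R,gA}$. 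Once this separation is in place McCord's additivity applies, yielding the required identity and completing the proof.
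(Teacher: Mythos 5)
Your proof is correct, and it takes a genuinely different route from the paper's. You reduce the commutativity of the square to the purely additive identity $\delta(R,A)=\sum_{g}\delta_{g}$, where $\delta_{g}$ is the classical connection map of the isolated piece $S_{R,gA}$, and then invoke McCord's topological-separation theorem from \cite{MR936812} as a black box. The paper instead proves the additivity from scratch: it lifts to $\widetilde{X}$, forms the finite isolated set $S'=\bigcup_i \widetilde{S}_{g_i}$, constructs an explicit index filtration $\{\widetilde{N}_0,\widetilde{N}_1,\dots,\widetilde{N}_n,\widetilde{N}\}$ for the Morse decomposition $\{g_1\widetilde{A},\dots,g_n\widetilde{A},\widetilde{R}\}$, and uses exit-time functions to exhibit the homotopy connection map as a wedge $\delta=\bigvee_i\delta_i$ through the splitting $\Sigma\widetilde{N}_A/\widetilde{N}_0\simeq\bigvee_i\Sigma\widetilde{N}_i/\widetilde{N}_0$, after which the projection $\Pi$ descends the sum. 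Your route is shorter and conceptually cleaner because McCord's theorem packages precisely the index-pair construction that the paper carries out by hand; the paper's route is self-contained and makes the wedge-sum mechanism explicit at the level of index pairs, which is closer in spirit to the rest of their index-filtration machinery.

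One small imprecision to fix: you write that one obtains ``mutually disjoint downstairs neighborhoods of the pieces $S_{R,gA}$.'' Those neighborhoods cannot literally be disjoint, since every $S_{R,gA}$ contains $R\cup A$. What the covering structure actually gives you, and what McCord's hypothesis requires, is that the connecting sets $C_{g}(R,A)$ themselves are pairwise separated (each is closed in $C(R,A)$, their closures meet only inside $R\cup A$), which follows from the compactness of each $S_{R,gA}$ established in Theorem~\ref{teo:G=G'} together with the finiteness from Proposition~\ref{prop:fin}. Stated that way, the verification of McCord's hypothesis is sound and your argument goes through.
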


\begin{proof}
	Fix  sheets $\widetilde{R}$ and $\widetilde{A}$ over $R$ and $A$, respectively, and let 
	$\widetilde{C}_R^S=\bigcup \widetilde{S}_{R,gA}$, where the union is over all $g\in G$ such that $ C^{\ast}(\widetilde{R},g\widetilde{A}) \neq \emptyset$. Since $\widetilde{C}_R^S$ is compact
	 then, by Proposition \ref{prop:fin}, there exist $g_1,\dots,g_n\in G$ such that $\widetilde{C}_R^S=\bigcup_{i=1}^n \widetilde{S}_{R,g_iA}$.

Also $\widetilde{C}_R^S$ is an isolated invariant set. In fact, suppose  that $\widetilde{C}_R^S$ is not an isolated invariant set and let $N$ be an isolating neighborhood for $S$.  Let $N'$ be a compact neighborhood of $\widetilde{C}_R^S$ and   $N''=N'\cap p^{-1}(N)$. Since $p^{-1}(N)$ is closed, then  $N''$  is a compact neighborhood of $\widetilde{C}_R^S$. By assumption, $\widetilde{C}_R^S$ is not an isolated invariant set, hence there is $\widetilde{x}\in N''$ such that $\widetilde{x}\cdot \mathbb{R}\subseteq N''$ and $\widetilde{x}\cdot \mathbb{R} \cap \widetilde{C}_R^S=\emptyset$. Therefore $p(\widetilde{x})\cdot \mathbb{R}\subseteq N$ and $S\subseteq p(N'')\subseteq N$, which implies that $S$ is not the maximal invariant set in $N$.  This is a contradiction.
	
Consider the flow order  $<$ for the Morse decomposition $\mathcal{M}(\widetilde{C}_R^S)=\{ g_1\widetilde{A}, \ldots, g_n\widetilde{A}, \widetilde{R}\}$ of $\widetilde{C}_R^S$. Thus there is an index filtration $\{\widetilde{N}_0, \widetilde{N}_1, \cdots \widetilde{N}_n, \widetilde{N}\}$ for $(\mathcal{M}(S'), <)$ such that: $(\widetilde{N}, \widetilde{N}_0)$ is a regular index pair for $\widetilde{C}_R^S$; $(\widetilde{N}_i, \widetilde{N}_0)$ is a regular index pair for $g_i\widetilde{A}$; $(\widetilde{N}, \widetilde{N}_A)$ is a regular index pair for $\widetilde{R}$, where $\widetilde{N}_A=\bigcup_{i=0}^n \widetilde{N}_i$; and $\widetilde{N}_i\cap \widetilde{N}_j=\widetilde{N}_0$ for $i\neq j$, see \cite{MR857439}.

Define the functions $\tau, \tau_i:\widetilde{N} \rightarrow [0,\infty]$ by  $$\tau(\widetilde{x})=\left\{ \begin{array}{rl} \textrm{sup}\{t>0 \ |\ \widetilde{x} \cdot [0,t]\subseteq \widetilde{N}\backslash \widetilde{N}_A\}, & \text{ if } \widetilde{x} \in  \widetilde{N}\backslash \widetilde{N}_A\\
		0, & \text{ if } \widetilde{x} \in  \widetilde{N}_A 
	\end{array} \right.$$ and
	
	$$\tau_i(\widetilde{x})=\left\{ \begin{array}{rl} \textrm{sup}\{t>0 \ |\ \widetilde{x} \cdot [0,t]\subseteq \widetilde{N}\backslash \widetilde{N}_{i}\}, & \text{ if } \widetilde{x} \in  \widetilde{N}\backslash \widetilde{N}_{i}\\
	0, & \text{ if } \widetilde{x} \in  \widetilde{N}_{i}
	\end{array} \right. .$$

	Since  $(\widetilde{N}, \widetilde{N}_A)$ is  a regular index pair, then $\tau$  is continuous.  By  Lemma 5.2 in  \cite{MR797044}, the index pair  $(\widetilde{N}, \widetilde{N}_i)$ is also regular, hence $\tau$ and $\tau_i$ are continuous. 
	
    The connection maps between index pairs  $\delta: \widetilde{N}/\widetilde{N}_A \longrightarrow \Sigma \widetilde{N}_A/\widetilde{N}_0$ and $\delta_i: \widetilde{N}/\widetilde{N}_A \longrightarrow \Sigma \widetilde{N}_{i}/\widetilde{N}_{0}$ are defined as
	
	$$\delta([\widetilde{x}])= \left\{\begin{array}{rl} 
	[\widetilde{x} \cdot \tau(\widetilde{x}), 1-\tau(\widetilde{x})], & 0\leq \tau(\widetilde{x})\leq 1,\\ \ 
 	 [\widetilde{N}_0\times 0], & 1\leq \tau(\widetilde{x})\leq \infty,
	\end{array}\right.
	$$

		$$\delta_i([\widetilde{x}])= \left\{\begin{array}{rl} 
	[\widetilde{x} \cdot \tau_i(\widetilde{x}), 1-\tau_i(\widetilde{x})], & 0\leq \tau_i(\widetilde{x})\leq 1,\\ \ 
	[\widetilde{N}_0\times 0], & 1\leq \tau_i(\widetilde{x})\leq \infty.
	\end{array}\right.
	$$

	When  $0\leq\tau(\widetilde{x})\leq 1$, one has that $\delta([\widetilde{x}])=\delta_i([\widetilde{x}])$, where $i$ is such that  $\widetilde{x}\cdot\tau(\widetilde{x})\in \widetilde{N}_{i}$. When $\tau(\widetilde{x})\geq 1$,  then $\delta([\widetilde{x}])=\delta_i([\widetilde{x}])=\delta_j([\widetilde{x}])= [\widetilde{N}_0\times 0]$, for all $i$ and $j$.
	Since $[\widetilde{N}_0\times 0]\in \Sigma \widetilde{N}_{i}/ \widetilde{N}_{0}$ for all $i$, then there is a natural isomorphism 
	$$\Sigma \widetilde{N}_A/\widetilde{N}_0 \simeq \bigvee_i (\Sigma \widetilde{N}_{i}/  \widetilde{N}_{0}),
	$$ where ``$\vee$'' denotes the wedge sum with base point $[\widetilde{N}_0\times 0]$.

  Note that  $\delta_i(\widetilde{N}/\widetilde{N}_A)\subseteq \Sigma \widetilde{N}_{i}/ \widetilde{N}_{0}$ and  if $\widetilde{x}\cdot \tau(\widetilde{x})\in \widetilde{N}_{i}\cap \widetilde{N}_{j}$, for $i\neq j$, then $\widetilde{x}\cdot \tau(\widetilde{x})\in \widetilde{N}_{0}$. Therefore  $\delta=\vee_i \delta_i$.{\footnote{	Given maps $f:A\to B$ and $g:A\to C$, one defines $f\vee g:A \to B\vee C$ by $f\vee g:A \stackrel{f+g}{\rightarrow} A+B\to (A+B)/{a\sim b}=A\vee B$, where $+$ is the sum operation between topological spaces and $a\in A$ and $b\in B$ are base points.}} 
	
	Applying the homological functor $\textrm{H}$ on $\Sigma^{-1}\circ \delta=\Sigma^{-1} \circ \vee_i\delta_i$, we have the usual homological connection map $\delta^N=\oplus_i \delta^N_i$. By projecting with respect to the covering map $p$, we obtain $\delta(N,N_A)=\oplus_i \delta_i(N,N_A)$, since $\Pi$ is induced by $p$. Hence, the following diagram commutes 
		$$\xymatrix{
   & & \oplus_i  NCH(\widetilde{N}_{i},\widetilde{N}_{0}) \ar[d]^\simeq\\
 \cdots	\ar[r] &	NCH(\widetilde{N},\widetilde{N}_{A}) \ar[r]^{\delta^N} \ar[ru]^{ \oplus \delta_i^N}\ar[d]^{\Pi(\widetilde{N},\widetilde{N}_{A})}& NCH(\widetilde{N}_{A},\widetilde{N}_{0}) \ar[d]^{\Pi(\widetilde{N}_{A},\widetilde{N}_{0})} \ar[r]& \cdots\\
\cdots	\ar[r] &	CH({N},{N}_{A}) \ar[r]^{\delta(N,N_A)}& CH({N}_{A},{N}_{0}) \ar[r]& \cdots
	}$$ where $(N, N_A, N_0)$ is an index filtration for $(A,R)$.
	\end{proof}

\subsection{$p$-Morse Decomposition}\label{sec:p-morsedec}

Let $(P,<)$ be a partial ordered set with  partial order $<$, where $P$ is a finite set of indices.
One says that $\pi$ and $\pi'$ are {\it adjacent elements} with respect to $<$ if they are distinct and there is no element $\pi''\in P$ satisfying $\pi''\neq \pi,\pi'$ and $\pi < \pi''<\pi' $ or $\pi' < \pi''<\pi $.

%\textcolor{red}{ver direitinho onde usa isso S unicao dos Sg's, e lembrar de colocar o correspondente na secao 4.3}

In what follows, we define  $p-$Morse decomposition for an invariant set $S$ which is not necessarily isolated or even not compact.

\begin{definition}
Let $S$ be an invariant set and $(P,<)$ be a partial ordered set.
A family of disjoint  isolated invariant sets $\mathcal{M}(S)= \{ M_{\pi} \}_{\pi \in P}$ is a ($<$-ordered) $p$-Morse decomposition for $S$ if the 
 sets $M_{\pi}$ are evenly covered for all $\pi \in P$ and
given  $x\in S$, one has that either $x \in M_{\pi}$ for some $\pi\in P$ or $x \in C^{\ast}(M_{\pi'},M_{\pi})$, where  $\pi,\pi'\in P$ and $\pi<\pi'$.

\end{definition}

Each set $M_{\pi}$ is called a $p$-{\it Morse set}.
The partial order $<$ on $P$ induces an obvious partial order on $\mathcal{M}(S)$, called an admissible ordering of the $p$-Morse decomposition.
  The flow defines an
admissible ordering of $\mathcal{M}(S)$, called the \textit{flow ordering} of $\mathcal{M}(S)$, denoted $<_f$, and such
that $M_{\pi}<_f M_{\pi'} $ if and only if there exists a sequence of distinct elements
of $P: \pi = \pi_0,\ldots,\pi_n=\pi'$, where  the set of connecting orbits $C^{\ast}(M_{\pi_j},M_{\pi_{j-1}})$ between $M_{\pi_j}$ and $M_{\pi_{j-1}}$ is nonempty, for each $j = 1, \ldots ,n$. Note that every admissible ordering of $\mathcal{M}(S)$ is an extension of $<_f$.

Given two adjacent elements $\pi,\pi'$ define 
$${M}_{{\pi},{\pi}^{\prime}}={M}_{\pi}\cup  C^{\ast}({M}_{\pi},{M}_{\pi^{\prime}})\cup {M}_{\pi^{\prime}}$$
which is an invariant set. Moreover, $(M_{\pi'},M_{\pi})$ is a $p$-attractor-repeller pair for ${M}_{{\pi},{\pi}^{\prime}}$.
From now on fix sheets  $\widetilde{M}_{\pi}$ over ${M}_{\pi}$, for all $\pi \in P$.
  Consider the subset $G_{{\pi}{\pi}^{\prime}}$ of $G$ of all elements $g\in G$ such that  $$\widetilde{M}_{\pi,g{\pi}^\prime}  :=\widetilde{M}_{\pi}\cup  C^{\ast}(\widetilde{M}_{\pi},g\widetilde{M}_{\pi'})\cup g\widetilde{M}_{\pi'}$$ is an  isolated invariant set (hence, compact).   Clearly,  $(g\widetilde{M}_{\pi^{\prime}},\widetilde{M}_{\pi})$ is an attractor-repeller pair in $\widetilde{M}_{{\pi},g{\pi}^{\prime}}$ as in \cite{MR978368}. Hence, for each $g\in G_{{\pi}{\pi}^{\prime}}$ there exists a long exact sequence
$$ \dots \stackrel{}{\longrightarrow} CH_{\ast}(g\widetilde{M}_{\pi^{\prime}}) \stackrel{i_{\ast}}{\longrightarrow} CH_{\ast}(\widetilde{M}_{{\pi},g{\pi}^{\prime}})  \stackrel{p_{\ast}}{\longrightarrow} CH_{\ast}(\widetilde{M}_{\pi})  \xrightarrow{\widetilde{\delta}_{\ast}(\widetilde{M}_{\pi}, g \widetilde{M}_{\pi^{\prime}})} CH_{\ast-1}(g\widetilde{M}_{\pi^{\prime}})  {\longrightarrow} \cdots  $$

By Proposition \ref{cor:IISIFF}, ${M}_{{\pi},g{\pi}^{\prime}}$ is an isolated invariant set. 
Fix a set of generators $B_k({M}_{\pi})$  for $CH_k({M}_{\pi})$, for each $\pi \in P$.

Denoting $NCH_{k}({M}_{\pi}) = \mathbb{Z}((G))\otimes_{\mathbb{Z}[G]} \mathbb{Z}[G][B_k(M_{\pi})] $,  let  
$$N\Delta:  \bigoplus_{\pi \in P} NCH_{\ast}({M}_{\pi}) \longrightarrow  \bigoplus_{\pi \in P} NCH_{\ast}({M}_{\pi}) $$  be the map  defined by the  upper triangular  matrix 
$$ N\Delta =
\left(
\begin{array}{ccc}
&&\\
 & \delta^N({\pi},{\pi^{\prime}}) & \\
 &&\\
\end{array}
\right)_{\pi,\pi' \in P},
$$   
where $ \delta^N({\pi},{\pi^{\prime}}) $ is given by $\delta^{N}({M}_{\pi},{M}_{\pi^{\prime}})$, as  defined in (\ref{defbordoAR}), if $\pi $ and $\pi'$ are adjacent elements and it is the null map  otherwise. 
The possible nonzero entries of  $N\Delta$  are always maps from $NCH(M_{\pi})$ to $NCH(M_{\pi'})$, where $\pi$ and $\pi'$ are   adjacent elements, and they give information on the orbits connecting $M_{\pi}$ to $M_{\pi'}$.

The natural question herein is how to define a map from $NCH(M_{\pi})$ to $NCH(M_{\pi'})$ when $\pi$ and $\pi'$ are not adjacent elements which would give more information than the null map. This question is related to a generalization of the work in this paper to the case of a $p$-Morse decomposition of an invariant set. 
The first step in this direction is to study the behavior of the Morse sets $M(I)$ for any interval $I$. Since we are considering $S$ as an invariant set, not necessarily isolated,  
the description of the structure of  $M(I)$ is a delicate and difficult problem.
For instance,  $M(I)$ is not necessarily an isolated invariant set and it may not be evenly covered. We will address this problem in a future work. 
  
  However, the $p$-connection matrix defined herein is  rich enough to describe the behaviour of the connecting orbits between the Morse sets $M_{\pi}$   in the case of   a
 $p$-Morse decomposition  where each  $M_{\pi}$ is a critical point of a circle-valued Morse function, as we prove
 in Section \ref{sec:novikov}.

\subsection{Examples}\label{subsec:examples}

In this subsection we present some examples where we  describe the $p$-connection matrix $N\Delta$  for groups $G$ that satisfy (H-1), (H-2) and (H-3).

\begin{example}[Klein bottle]  Let $X$ be the Klein bottle. 
 Consider a flow on  $X$ having one repelling singularity $x$, two saddle singularities $y_1,y_2$ and one attracting singularity $z$, as  in Figure \ref{klein}, where we consider the Klein bottle as the quotient space of $[0,1]\times [0,1]$ by the relations $(0, y)\sim(1, y)$ and $(x, 0) \sim (1-x, 1)$.
 Consider the $p$-Morse decomposition where each Morse set is a singularity and the partial order is given by the flow.
 
 \begin{figure}[h!t]
    \centering
\includegraphics[scale=.7]{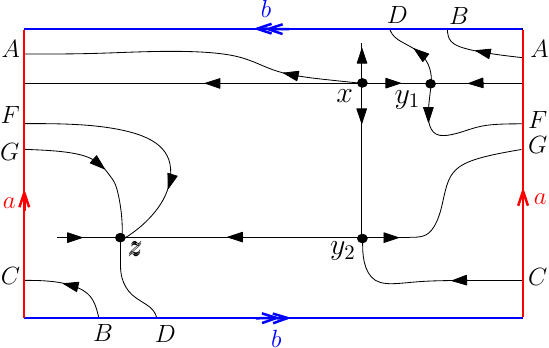}
    \caption{A flow on the Klein bottle.}
    \label{klein}
\end{figure}
 
 The universal cover  of $X$ is the plane $\mathbb{R}^2$ and its
 deck transformations  group $G$ has the presentation $\langle a, b \mid ab = b^{-1}a  \rangle$. In this case, one considers $\mathbb{Z}((G))$ as the group ring $\mathbb{Z}[G]$.

As  usual in the Morse setting,  one can associate the generator of the homology Conley index of each singularity with the singularity itself.
 With this notation, the boundary operator is  given by $\delta^N_{2}(x,y_1) = y_1 + b.y_1$, $\delta^N_{2}(x,y_2) = y_2 + bab .y_2 = y_2 + a.y_2$,  $\delta^N_{1}(y_1,z) =b. z + a.z$, $\delta^N_{1}(y_2,z) = z +  b.z$.
 
\end{example}

\begin{example}[Double torus] Consider a flow  on the double torus $X$ having the invariant set as in Figure \ref{fig:bitoro}, where we present a saddle singularity $y$, an attracting periodic orbit $\gamma_0$ and a  repeller singularity $x$.

Consider  the 5-torus $\widetilde{X}$ as   a covering space of $X$ with 4 leaves as in Figure \ref{fig:Genus_4}. The deck transformation group is $G=\mathbb{Z}_2\oplus \mathbb{Z}_2=\{a,b \mid a^2=1, b^2=1\}$, which is a finite group, hence  $\mathbb{Z}((G))$ is the group ring $\mathbb{Z}[G]$.

\begin{figure}[h!]
    \centering
\includegraphics[scale=1.5]{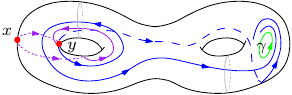}
    \caption{A flow on the double torus.}
    \label{fig:bitoro}
\end{figure}

As  usual in the Morse setting,  one can associate the generator of the homology Conley index of each singularity with the singularity itself. Let $r_1$ and $r_0$ be the  generators of $CH_{i}(\gamma)$, for $i=1$ and $i=0$, respectively. In what follows, we compute the boundary operator $\delta^N$ using this notation.  
Consider the invariant set $S=\{y\}\cup C(y,\gamma)\cup \{\gamma\}$, the boundary operator is  given by $\delta^N_{1}(y,\gamma) = a\gamma+ab\gamma $ and $\delta^N_{k}=0$ for $k\neq 1$.
 Now, consider the invariant set $S=\{x\}\cup C(x,y)\cup \{y\}$, the boundary operator is  given by
 $\delta^N_{2}(x,y) =y+ay$  and $\delta^N_{k}=0$ for $k\neq 2$.

\end{example}

\begin{figure}[h!t]
    \centering
\includegraphics[scale=.7]{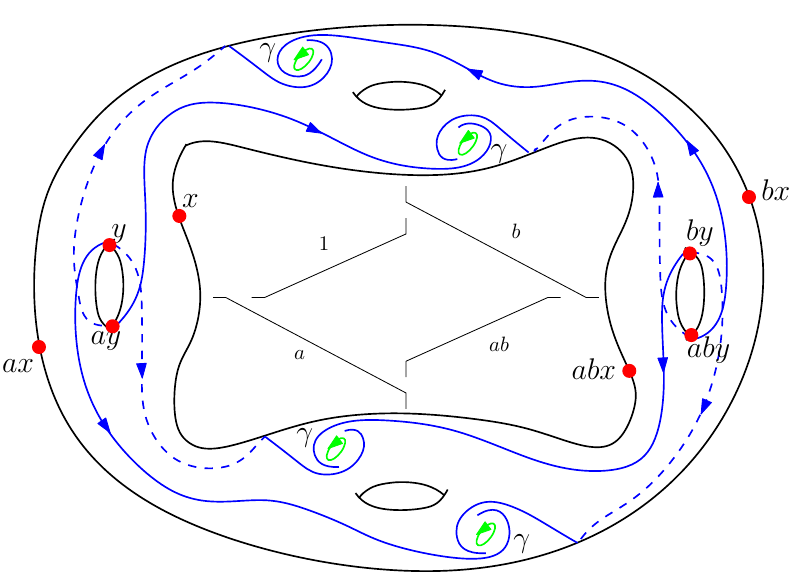}
    \caption{A covering space of the double torus with deck transformation group isomorphic to  $<a,b\ |\ a^2=b^2=aba^{-1}b^{-1}=1>\cong\mathbb{Z}_2\oplus\mathbb{Z}_2$.}
    \label{fig:Genus_4}
\end{figure}

\begin{example}[Solid double torus]
Consider a flow on the solid double torus $M$ having two consecutive critical points $p$ and $q$. Note that the Cayley graph of $\mathbb{Z}\ast\mathbb{Z}$ where every edge is a solid cylinder, as in  Figure \ref{Antenna}, is a universal covering of $M$. In this case $G=\pi_1(M)=\mathbb{Z}\ast\mathbb{Z}$.
Assume that, for each $g \in G$ there is one isolated $g$-orbit between  $p$ and $q$, and hence there are infinite isolated connections between $p$ and $q$.
This is the case where $\mathbb{Z}((G))$ satisfies the condition (H-2) in Subsection \ref{subsec:4.2}, where $\mathbb{Z}\ast\mathbb{Z}$ is equipped with the dictionary order. Note that, dynamical systems such that $\{g\in G \mid C_{g}(R,A)\neq\emptyset\}$ is infinite are in general not trivial to understand completely, however the machinery constructed in this paper contributes to have a better understanding  of the global behaviour.
\end{example}

\begin{figure}[h!t]
    \centering
\includegraphics[scale=.7]{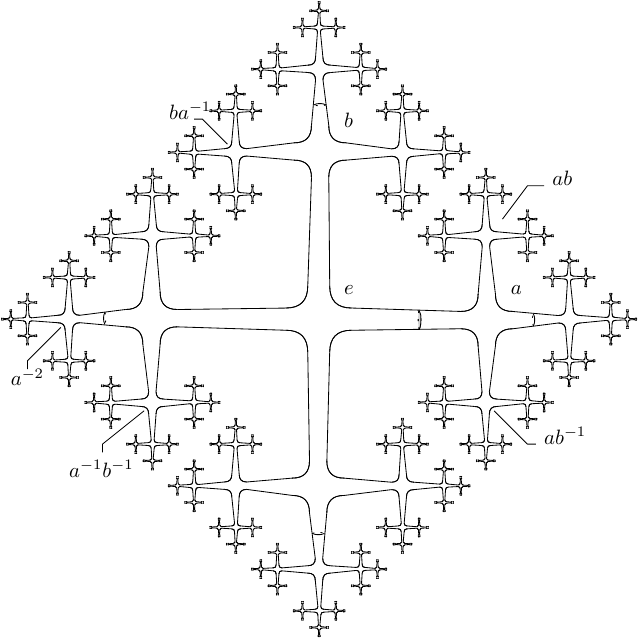}
    \caption{Universal covering of the solid double torus: Cayley graph of $\mathbb{Z}\ast\mathbb{Z}$ where every edge is a solid cylinder.}
    \label{Antenna}
\end{figure}

\newpage

\section{Novikov differential as  a $p$-Connection Matrix}\label{sec:novikov}

Let $M$ be a compact Riemannian manifold. Consider $(\overline{M},p_E)$ the infinite cyclic covering of $M$ induced by a circle-valued Morse function $f:M\rightarrow {S}^1$.
In this case, the deck transformation group is $G=\mathbb{Z}$, the group ring $\mathbb{Z}[G]$ is isomorphic to the polynomial ring $\mathbb{Z}[t]$ and $\mathbb{Z}((G))$ is isomorphic to the ring of the formal Laurent series $\mathbb{Z}((t))$. 

Consider the flow ordering $\prec_{f}$ and a ($\prec_{f}$-ordered) $p$-Morse decomposition $\mathcal{M}(M) = \{M_{\pi}\}_{\pi\in P}$ of $M$, where each $p$-Morse set $M_{\pi}$ is a critical point of $f$. 
Throughout this section, fix sheets  $\overline{M}_{\pi}$ over ${M}_{\pi}$, for all $\pi \in P$.
    
Given $M_{\pi}=\{h_k\}$ and  $M_{\pi'}=\{h_{k-1}\}$, where $h_k$ and $h_{k-1}$ are consecutive critical points of $f$ with Morse indices $k$ and $k-1$, respectively, one has that $\pi$ and $\pi'$ are adjacent elements w.r.t. $\prec_f$. Moreover
$$\overline{M}_{{\pi},t^{\ell}{\pi}^{\prime}}=\overline{M}_{\pi}\cup  C^{\ast}(\overline{M}_{\pi},t^{\ell}\overline{M}_{\pi^{\prime}})\cup t^{\ell}\overline{M}_{\pi^{\prime}}$$ 
is an isolated invariant set, for all $\ell\in \mathbb{Z}$. Hence, $G_{{\pi}{\pi}^{\prime}}=G=\mathbb{Z}$.

Consider the attractor-repeller pair $(t^{\ell}\overline{M}_{\pi^{\prime}},\overline{M}_{\pi})$ of $\overline{M}_{{\pi},t^{\ell}{\pi}^{\prime}}$, for each $\ell\in \mathbb{Z}$. There exists a long exact sequence
$$ \dots \stackrel{}{\longrightarrow} CH_{\ast}(t^{\ell}\overline{M}_{\pi^{\prime}}) \stackrel{i_{\ast}}{\longrightarrow} CH_{\ast}(\overline{M}_{{\pi},t^{\ell}{\pi}^{\prime}})  \stackrel{p_{\ast}}{\longrightarrow} CH_{\ast}(\overline{M}_{\pi})  \stackrel{\overline{\delta}_{\ast}(\overline{M}_{\pi}, t^{\ell} \overline{M}_{\pi^{\prime}})}{\xrightarrow{\hspace*{1.5cm}}} CH_{\ast-1}(t^{\ell}\overline{M}_{\pi^{\prime}})  {\longrightarrow} \dots.  $$

In this case, the set of generators $B_k(M_\pi)$ of the homology Conley index $CH_{\ast}(M_{\pi})$ has exactly one element and $B_{i}(M_\pi)=0$, for all $i\neq k$. Thus $NCH_{k}({M}_{\pi}) = \mathbb{Z}((t))  \otimes_{\mathbb{Z}[t]} \mathbb{Z}[t][B_k(M_\pi)] $ and     
$$N\Delta:  \bigoplus_{\pi \in P} NCH_{\ast}(M_{\pi}) \longrightarrow  \bigoplus_{\pi \in P} NCH_{\ast}({M}_{\pi}), $$  given by the matrix 
$$ N\Delta =
\Bigg(
\ \  \delta^{N}(\pi,\pi^{\prime})  \ \
\Bigg)_{\pi,\pi' \in P},
$$  
is an upper triangular boundary map, 
where $\delta^{N}({\pi},{\pi^{\prime}}):=\delta^{N}({M}_{\pi},{M}_{\pi^{\prime}})$ is the connecting map for the attractor-repeller pair introduced in Section \ref{subsec:4.2}.
Note that,
$\delta^{N}({\pi},{\pi^{\prime}})=0$ whenever $\pi$ and $\pi'$ are not adjacent.

\begin{example} Consider a flow $\varphi$ on the  solid torus which has two hyperbolic singularities $p$ and $q$ of indices $2$ and $1$, respectively. Moreover, for each ${\ell}\in\mathbb{Z}_+$ there is only one flow line joining $p$ and $q$ which intersects a given regular level set $\ell$ times (turns around $\ell$ times). 
Considering the invariant set  $S= \{p\} \cup C^{\ast}(\{p\},\{q\}) \cup \{q\}$, the  collection $\mathcal{M}(S) = \{ M_{\pi}=\{p\}; M_{\pi'}=\{q\} \}$ is a  ($\prec_{f}$-ordered) $p$-Morse decomposition of $S$.  See Figure \ref{fig:NovExampleinfinite}.

Fix sheets  $\overline{M}_{\pi}=\{\overline{p}\}$ and $\overline{M}_{\pi'}=\{\overline{q}\}$ over ${M}_{\pi}$ and ${M}_{\pi'}$. The set
$$\overline{M}_{{\pi},t^{\ell}{\pi}^{\prime}}=\{\overline{p}\}\cup  C^{\ast}(\{\overline{p}\},\{t^{\ell}\overline{q}\})\cup \{t^{\ell}\overline{q}\}$$
is composed by two singularities and a unique orbit between them, hence it is an isolated invariant set, for all $\ell\in \mathbb{Z}$.

Even though $\overline{C}^{S}_{\{p\}}=\displaystyle\bigcup_{\ell\in \mathbb{Z}_+}\overline{M}_{{\pi},t^{\ell}{\pi'}}$ is not compact, it can be decomposed  into a union of isolated invariant sets. Therefore $S$ is a union  of  evenly covered isolated invariant sets, i.e. ${S}=\displaystyle\bigcup_{\ell\in \mathbb{Z}_+}{M}_{{\pi},t^{\ell}{\pi'}}$.

Let $NCH_{1}(\{q\}) =  \mathbb{Z}((t)) \otimes_{\mathbb{Z}[t]} \mathbb{Z}[t][a] $ and $ NCH_{2}(\{p\}) =  \mathbb{Z}((t))  \otimes_{\mathbb{Z}[t]} \mathbb{Z}[t][r] $, where $B_1(\{q\})=\{a\}$ and  $B_2(\{p\})=\{r\}$ are generators of the homology Conley indices of $\{q\}$ and $\{p\}$, respectively.
The map $$N\Delta: NCH_{\ast}(\{q\}) \bigoplus NCH_{\ast}(\{p\}) \longrightarrow  NCH_{\ast}(\{q\}) \bigoplus NCH_{\ast}(\{p\})$$ is defined by the matrix 
$$
\left(
\begin{array}{cc}
0 & \delta^N (\{p\},\{q\}) \\
0 & 0 \\
\end{array}
\right)
$$
where 
 $ \delta^N_{2} (\{p\},\{q\}) (r)= \displaystyle  \sum_{\ell\in \mathbb{Z}_+} t^{\ell} \ \overline{\delta}_k(\overline{p}, t^{\ell} \overline{q})(r) = \displaystyle  \sum_{\ell\in \mathbb{Z}_+}  \pm t^{\ell} a $.
\end{example}

\begin{figure}[h!t]
    \centering
   \includegraphics[scale=.8]{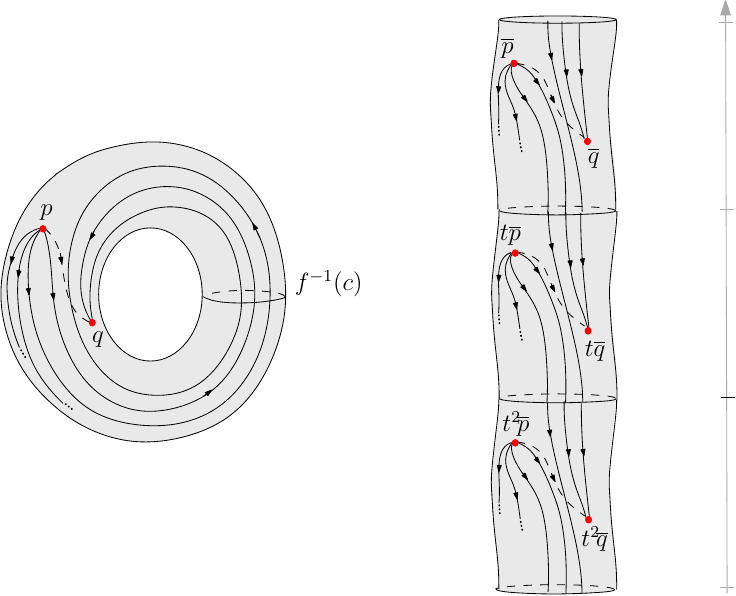} 
    \caption{A flow on a solid torus.}
    \label{fig:NovExampleinfinite}
\end{figure}

In order to prove that the Novikov boundary differential $\partial^{Nov}$  is a p-connection matrix for the Morse decomposition $\mathcal{M}(M)$, we make use of Salamon's results in \cite{MR1045282} %and  direct limit, 
 and the characterization of the Novikov complex by direct limits given in \cite{MR2017851}.

\begin{theorem}
The Novikov differential $\partial^{Nov}$ is the $p$-connection matrix for the $\prec_{f}$-ordered $p$-Morse decomposition, where $(\overline{M},p)$ is the infinite cyclic covering space.  
\end{theorem}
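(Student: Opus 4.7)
The strategy is an entry-by-entry identification of $N\Delta$ with $\partial^{Nov}$. Since each Morse set $M_\pi = \{p\}$ is a non-degenerate critical point of $f$ of Morse index $k$, the homology Conley index $CH_\ast(M_\pi)$ is $\Z$ concentrated in degree $k$, so $NCH_j(M_\pi) \cong \Z((t))$ if $j=k$ and vanishes otherwise. Consequently, the $(\pi,\pi')$-entry of $N\Delta$ can be nontrivial only when the Morse indices of $M_\pi$ and $M_{\pi'}$ differ by one, which matches the support of $\partial^{Nov}$. Moreover, if $(\pi,\pi')$ are not adjacent in $\prec_f$, then $\delta^N(\pi,\pi')$ is zero by the definition given in Subsection \ref{sec:p-morsedec}; and for consecutive Morse indices, adjacency in $\prec_f$ is equivalent to the existence of connecting flow lines from $p$ to $q$ (any such flow line is automatically direct, since no intermediate critical point can sit between consecutive indices under the Morse-Smale assumption), so $N(p,q;f)=0$ in the non-adjacent case as well.

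The core computation concerns an adjacent pair $(\pi,\pi')$ with $M_\pi = \{p\}$ and $M_{\pi'} = \{q\}$ of Morse indices $k$ and $k-1$. Fixing lifts $\overline{p},\overline{q}\in \overline{M}$, the definition of $\delta^N$ specializes, for $G=\Z$, to
\begin{equation*}
\delta^N(\pi,\pi')(p) \;=\; \sum_{\ell\in\Z} t^\ell\,\widetilde{\delta}\bigl(\overline{M}_\pi, t^\ell\overline{M}_{\pi'}\bigr)(p),
\end{equation*}
where $\widetilde{\delta}\bigl(\overline{M}_\pi, t^\ell\overline{M}_{\pi'}\bigr)$ is the Conley connecting homomorphism of the attractor-repeller pair $(\{t^\ell\overline{q}\},\{\overline{p}\})$ inside the compact isolated invariant set $\overline{M}_{\pi,t^\ell\pi'} = \{\overline{p}\} \cup C^{\ast}(\overline{p}, t^\ell\overline{q}) \cup \{t^\ell\overline{q}\}$. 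This set is contained in the fundamental cobordism $W_\ell = F^{-1}\bigl[F(t^\ell\overline{q})-\varepsilon,\,F(\overline{p})+\varepsilon\bigr]$, on which $-\nabla F$ defines a Morse-Smale flow with only finitely many critical points. Applying Salamon's theorem \cite{MR1045282} to a sufficiently small isolating neighborhood of $\overline{M}_{\pi,t^\ell\pi'}$ (equivalently, to the Morse complex of the restricted flow on $W_\ell$), the connecting homomorphism $\widetilde{\delta}\bigl(\overline{M}_\pi, t^\ell\overline{M}_{\pi'}\bigr)$ equals multiplication by the intersection number $n(\overline{p},t^\ell\overline{q};F)$.

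Summing over $\ell$ produces
\begin{equation*}
\delta^N(\pi,\pi')(p) \;=\; \sum_{\ell\in\Z} n(\overline{p},t^\ell\overline{q};F)\,t^\ell\,q \;=\; N(p,q;f)\,q \;=\; \partial^{Nov}(p),
\end{equation*}
which is the desired identity. Membership in $\Z((t))$ is automatic: a nonzero term forces the existence of a connecting orbit, hence $F(\overline{p})>F(t^\ell\overline{q})$, bounding $\ell$ below and providing the well-ordered support required by (H-2).

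The principal obstacle is that Salamon's theorem in \cite{MR1045282} is formulated for closed Morse-Smale manifolds, whereas $\overline{M}$ is noncompact with infinitely many critical points of $F$. This is circumvented by restricting, for each fixed $\ell$, to the compact fundamental cobordism $W_\ell$ on which $F$ is a genuine Morse-Smale function with finitely many critical points; alternatively, one invokes the direct-limit presentation of the Novikov complex from \cite{MR2017851}, in which $\partial^{Nov}$ is realized as the limit of Morse boundary operators on the cobordisms $F^{-1}[-N,N]$, each of which coincides with a classical connection matrix by Salamon's theorem. Either reduction transfers the identification into the compact Morse-Smale regime in which Salamon's result applies directly, completing the verification that $\partial^{Nov}=N\Delta$.
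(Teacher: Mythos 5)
Your entry-by-entry approach is correct in substance but genuinely different in organization from the paper's argument, and it is worth noting exactly where they diverge. You fix an adjacent pair $(\pi,\pi')$, and for each $\ell\in\mathbb{Z}$ you identify the Conley connecting homomorphism $\widetilde{\delta}(\overline{M}_\pi, t^\ell\overline{M}_{\pi'})$ with the intersection number $n(\overline p, t^\ell\overline q; F)$ by localizing to a compact cobordism containing $\overline{M}_{\pi,t^\ell\pi'}$ and invoking Salamon; then you sum formally over $\ell$. The paper instead runs a \emph{global} inverse-limit argument: it exhibits $NCH_k(M_\pi)$ as $\mathbb{Z}((t))\otimes_{\mathbb{Z}[[t]]}\varprojlim_\ell \mathbb{Z}[B_k^\ell(M_\pi)]$, identifies the truncated map $N\Delta^\ell$ with Franzosa's connection matrix $\Delta^\ell$ of the Morse decomposition on the finite cobordism $W(\ell)=\bigcup_{j=0}^\ell t^jW$, applies Salamon's Lemma 2 to get $\Delta^\ell=\partial_\ast(W(\ell),F(\ell))$, and only then passes to the limit, closing the argument with Lemma 2.5 of \cite{MR2017851}, which \emph{characterizes} the Novikov complex as that inverse limit. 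Your approach is more elementary and transparent about where the Laurent series come from (you observe membership in $\mathbb{Z}((t))$ directly from $F(\overline p)>F(t^\ell\overline q)$ bounding $\ell$ below), whereas the paper's approach produces the chain complex identification $(NC(M),N\Delta)\cong(C^{Nov}(M,f),\partial^{Nov})$ in one stroke via the universal property of the inverse limit, and makes the $\mathbb{Z}[[t]]$- and $\mathbb{Z}((t))$-module structures manifest rather than checked term by term.

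Two small points of imprecision in your version. First, when you "apply Salamon's theorem to a sufficiently small isolating neighborhood," what you actually need is the chain: (i) Franzosa's characterization that, for an \emph{adjacent} pair in a Morse decomposition, the connection-matrix entry equals the attractor-repeller connecting homomorphism of the isolated pair, followed by (ii) Salamon's identification of the whole connection matrix with the Morse differential on the compact Morse--Smale cobordism. Collapsing these into one citation is harmless but hides a step that the paper keeps visible by working with $\Delta^\ell$ explicitly. Second, the paper's limit is a \emph{projective} (inverse) limit $\varprojlim$, not a direct limit; your final paragraph calls it a direct limit, which should be corrected to match the construction in \cite{MR2017851}. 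Neither issue affects the validity of the argument.
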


\begin{proof}
Assume that $1\in S^1$ is a regular value  of $f$. Denoting by $V$ the set $f^{-1}(1)\subset M$ and cutting $M$ along $V$, we obtain the fundamental cobordism $(W, tV,V)$ for $\overline M$ and the Morse function $F_V:(W,tV,V)\to ([0,1],0,1)$. 
    
\begin{figure}[h!t]
    \centering
   \includegraphics[scale=.8]{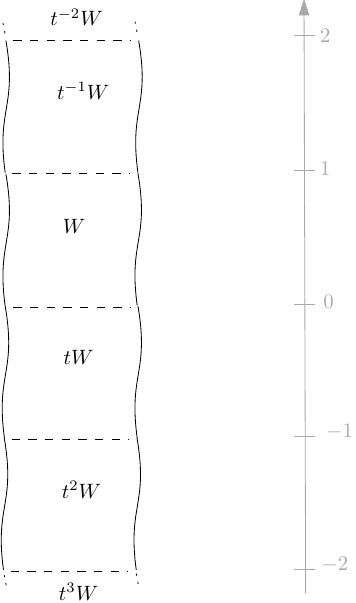} 
    \caption{A fundamental domain.}
    \label{Cornea}
\end{figure}

Note that each Morse set $M_{\pi}$, which is critical point  of $f$, has a unique lift to a  critical point of  $F_V$ in $W$ which will be denoted by $\overline{M}_{\pi}$. Moreover, one has that $$F=\bigcup_{j=-\infty}^{\infty}t^{j}F_V \ , \hspace{0.5cm}\overline{M}=\bigcup_{j=-\infty}^{\infty}t^{j}W \hspace{0.5cm} \text{and} \hspace{0.5cm} Crit_k(F)=\bigcup_{j=-\infty}^{\infty}t^{j}Crit_k(F_V).$$
    
Denote by $W(\ell) = \bigcup_{j=0}^{\ell}t^{\ell}W$ and $F(\ell) = F|_{W(\ell)}$.

Choosing the lifts of the critical points of $f$ that belong to $W$ in the construction of the chain complex $(NC(M),\Delta)$, the coefficients of the differential are in $Z[[t]] \subset Z((t))$.
    
Define $[B_k^\ell({M}_\pi)]:= [B_k(\overline{M}_\pi), B_k(t\overline{M}_\pi), B_k(t^2\overline{M}_\pi), \ldots, B_k(t^\ell\overline{M}_\pi)],$ where $B_k(t^i\overline{M}_\pi)$ is the set of generators of $CH_k(t^i\overline{M}_\pi)$.
Note that 
$\{\mathbb{Z}[B_k^{\ell}({M}_{\pi})], \pi^{\ell}_{j}\}$ is an inverse system, where $\pi^{\ell}_{j}$ are the natural projections.
Hence, the inverse limit is a based f.g. free $\mathbb{Z}[[t]]$-module and 
 $$\underset{\underset{\ell} {\longleftarrow}}{\mathrm{lim}}\ \mathbb{Z} [B_k^\ell({M}_\pi)]=\mathbb{Z}[[t]] [B_k({M}_\pi)].$$

Therefore,
\begin{eqnarray}\nonumber 
NCH_k(M_{\pi}) & = & \mathbb{Z}((t)) \otimes_{\mathbb{Z}[[t]]} \mathbb{Z}[[t]] [B_k^\ell({M}_\pi)]\\
\nonumber
& = & \mathbb{Z}((t)) \otimes_{\mathbb{Z}[[t]]} \underset{\underset{\ell} {\longleftarrow}}{\mathrm{lim}}\ \mathbb{Z} [B_k^\ell({M}_\pi)]. \nonumber
\end{eqnarray}

Now, consider the upper triangular boundary map $$ N\Delta^{\ell} =
\Bigg(
 \bar{\delta}^{\ell}(\pi,\pi') 
\Bigg)_{\pi,\pi' \in P}
$$  
where 
\begin{eqnarray}
 {\bar{\delta}}^\ell_k(\pi,\pi')   : \mathbb{Z}[B_k^\ell({M}_\pi)]  & \longrightarrow & \mathbb{Z}[B_{k-1}^\ell({M}_{\pi'})]  \nonumber\\
r_{\alpha}^k & \longmapsto &   \sum_{0\leq i\leq \ell}  \overline{\delta}_k(\overline{M}_\pi, t^i\overline{M}_{\pi'})(r_{\alpha}^k)   \nonumber 
\end{eqnarray}
is the connecting map for the attractor-repeller pair $(M_{\pi'}, M_{\pi})$, and $\bar{\delta}^\ell_k(\pi,\pi')=0$ when $\pi$ is not adjacent to $\pi'$.
Therefore, $N\Delta$ can be rewritten as the inverse limit of the maps $N\Delta^{\ell}$, i.e., for each $k\geq 0$,

$$N\Delta_k  = 1 \otimes_{\mathbb{Z}[[t]]} \underset{\underset{\ell} {\longleftarrow}}{\mathrm{lim}}  \ N\Delta^{\ell}_{k}. $$

Summarizing, 
$$ \big(NC(M),N\Delta\big) = \Bigg(\bigoplus_P \bigg(\mathbb{Z}((t)) \otimes_{\mathbb{Z}[[t]]} \underset{\underset{\ell} {\longleftarrow}}{\mathrm{lim}}\ \mathbb{Z} [B_k^\ell({M}_\pi)]\bigg) \ , \ 1\otimes_{\mathbb{Z}[[t]]} \underset{\underset{\ell} {\longleftarrow}}{\mathrm{lim}}\ N\Delta^{\ell} \Bigg). $$

On the other hand, for each $k\in  \mathbb{Z}$ and $\ell \in  \mathbb{N}$, one has that $ \mathbb{Z} [B_k^\ell({M}_\pi)]=  \bigoplus_{j=0}^\ell CH_k(t^{j}\overline{M}_{\pi})$, hence
$$NCH_k(M_{\pi})= \mathbb{Z}((t)) \otimes_{\mathbb{Z}[[t]]} \underset{\underset{\ell} {\longleftarrow}}{\mathrm{lim}}\ \bigoplus_{j=0}^\ell CH_k(t^{j}\overline{M}_{\pi}).$$

Note that $N\Delta^{\ell}_{\ast}$ coincides with the Franzosa's connection matrix,  $$\Delta^{\ell}_{k}:\bigoplus_{\pi\in P} \bigoplus_{j=0}^\ell CH_k(t^{j}\overline{M}_{\pi})\to \bigoplus_{\pi\in P} \bigoplus_{j=0}^\ell CH_{k-1}(t^{j}\overline{M}_{\pi}),$$
of the induced Morse decomposition for  $\cup_{j=0}^{\ell}t^{j}W$. 

Therefore, $N\Delta_k  =  1 \otimes_{\mathbb{Z}[[t]]} \underset{\underset{\ell} {\longleftarrow}}{\mathrm{lim}}\Delta^{\ell}_{k} $
and

$$ (NC(M),\Delta) = \Bigg(\bigoplus_P \bigg(\mathbb{Z}((t)) \otimes_{\mathbb{Z}[[t]]} \underset{\underset{\ell} {\longleftarrow}}{\mathrm{lim}}\ \bigoplus_{j=0}^\ell CH_{\ast}(t^{j}\overline{M}_{\pi})\bigg), 1\otimes_{\mathbb{Z}[[t]]} \underset{\underset{\ell} {\longleftarrow}}{\mathrm{lim}}\ \Delta^{\ell}_{\ast} \Bigg). $$

Since $W(\ell)$ is a compact manifold with no critical points in the boundary, it follows from Lemma 2 in \cite{MR1045282},  that the connection matrix for a Morse flow, given by the negative gradient of the Morse-Smale function $F(\ell)$, coincides with the Morse differential of $F(\ell)$, i.e.
$$\Delta^{\ell}_{\ast} = \displaystyle\partial_{\ast}(W(\ell), F(\ell)).$$

Since $\bigoplus_{j=0}^\ell CH_k(t^{j}\overline{M}_{\pi}) = \mathbb{Z}[Crit_k(F(\ell))]$, then
\begin{eqnarray}
   (NC(M),N\Delta) & = & \bigg( \mathbb{Z}((t)) \otimes_{\mathbb{Z}[[t]]} \underset{\underset{\ell} {\longleftarrow}}{\mathrm{lim}}\ C_{\ast}(W(\ell),F(\ell)) \ , \  1\otimes_{\mathbb{Z}[[t]]} \underset{\underset{\ell} {\longleftarrow}}{\mathrm{lim}}\ \partial_{\ast}(W(\ell), F(\ell)) \bigg)  \nonumber \\ 
    & =& \big(C^{Nov}(M,f)\ ,\ \partial^{Nov}\big) \nonumber
\end{eqnarray}
where the second equality  follows by  Lemma 2.5 in  \cite{MR2017851}.
    \end{proof}
    
    \vspace{0.5cm}
    
    As we proved in this section, the Novikov theory fits nicely as a special case of  the covering action on Conley index. Consequently, it opens the possibility to make use of a variety of tools from Conley index theory in Novikov theory.  For instance, one can study periodic orbits \cite{MR1354310,MR1879735}, chaos \cite{MR1337206}, cancellations \cite{MR3784745}, and so forth. Furthermore, it enables us to apply transition matrix as in \cite{MR3561428,MR3695843} to understand bifurcations that  may occur  when we consider a parameterized family of gradient flows of circle-valued Morse functions.

\vspace{1cm}

\section*{Acknowledgments}
The first author would like to thank  the
S\~ao Paulo Research Foundation  (FAPESP) for the
  support under grants 2020/11326-8 and 2016/24707-4.
The second author would like to thank  the
S\~ao Paulo Research Foundation  (FAPESP) for the
  support under grants
2016/24707-4
and 
2018/13481-0. The third author is affiliated with DIMACS (the Center for Discrete Mathematics and Theoretical Computer Science), Rutgers University, and IME-UFG (Instituto de Matem\'atica e Estat\'istica, Universidade Federal de Goi\'as) and would like to acknowledge the support of the National Science Foundation under grant HDR TRIPODS 1934924.

\bibliographystyle{amsplain}

\end{document}